\documentclass[twoside,11pt]{article}

\usepackage[preprint]{jmlr2e}
\usepackage{algpseudocode}
\usepackage{algorithm}
\usepackage{float}
\usepackage{amsmath}
\usepackage{amssymb}
\usepackage{optidef}

\ShortHeadings{Safeguarding Mechanism for L-BFGS Inverse Hessian Approximation Matrix}{Li}
\firstpageno{1}

\begin{document}

\title{On the Condition Number Upper Bound of the L-BFGS Inverse Hessian Approximation Matrix with a Two-Sided Geometric Envelope Safeguarding Mechanism}

\author{\name Don Li \email don@pdx.edu \\
       \addr Department of Mathematics \& Statistics\\
       Portland State University \\
       Portland, OR 97201, USA}

\editor{N/A}

\maketitle

\begin{abstract}
The limited-memory BFGS (L-BFGS) algorithm is a cornerstone of large-scale optimization due to its linear memory and computational costs. However, in ill-conditioned or non-convex landscapes, the implicit inverse Hessian approximation can suffer from an exploding condition number, leading to numerical instability and degraded convergence. To address this, we propose Two-Sided L-BFGS, a safeguarded variant that dynamically constrains the condition number of the inverse Hessian operator via a two-sided geometric envelope. Moreover, we show that Two-Sided L-BFGS preserves accumulated curvature information and maintains standard $O(mn)$ memory and per-iteration time complexities. We prove that this geometric envelope yields a uniform bound on the condition number of every inverse Hessian approximation generated by the algorithm. By tracking the algebraic evolution of the extreme eigenvalues through $m$ consecutive quasi-Newton updates starting from a scaled identity matrix, the resulting bound is expressed explicitly as a function of the memory depth, problem dimension, and envelope hyperparameters. Moreover, we show that Two-Sided L-BFGS preserves asymptotic global convergence in non-convex regimes under standard smoothness and strong Wolfe line-search assumptions, matching the theoretical guarantees of L-BFGS variants utilizing the Li-Fukushima cautious update rule. Numerical experiments on high-dimensional optimization problems demonstrate that the proposed method maintains well-conditioned inverse Hessian approximations and improves robustness and convergence behavior on ill-conditioned benchmarks.
\end{abstract}

\begin{keywords}
Large-Scale Optimization, L-BFGS Algorithm, Quasi-Newton Methods, Inverse Hessian Approximation, Condition Number Bounding
\end{keywords}

\section{Introduction}

Quasi-Newton methods are widely used in a plethora of different large-scale optimization settings, ranging from highly non-convex deep learning architectures [\cite{RM19}] to numerical weather prediction and climate modeling [\cite{DN03}]. Such methods balance improved convergence rates with lower computational costs via approximations of the inverse Hessian matrix using only the gradient. Recall that, in general form, Newton's method employs the update rule, for parameter $x_k$,

\begin{equation}
x_{k+1} = x_k - \eta[\nabla^2f(x_k)]^{-1} \nabla f(x_k),
\end{equation}

\noindent
where $[\nabla^2f(x_k)]^{-1}$ is the inverse Hessian, $\nabla f(x_k)$ is the gradient, and $\eta$ is the \textit{learning rate} or \textit{step size}. $\nabla^2f(x_k)$ provides crucial second-order curvature information and guarantees descent for $\nabla^2f(x_k) \succ 0$, but is computationally infeasible in large-scale settings. The \textit{Broyden, Fletcher, Goldfarb, and Shanno} (BFGS) algorithm was introduced as a Quasi-Newton method to address this computational bottleneck with respect to finding $[\nabla^2f(x_k)]^{-1}$. Quasi-Newton methods replace the exact $[\nabla^2f(x_k)]^{-1}$ with an approximation matrix $H_k$, where $B_k$ is the approximation matrix of $\nabla^2f(x_k)$. As alluded to earlier, as a Quasi-Newton method, BFGS constructs $B_k$ from the gradient, where Hessian approximation updates must satisfy the \textit{secant condition}, i.e.,

\begin{equation}
B_{k+1}s_k = y_k,
\end{equation}

\noindent
which we derive as follows:

\begin{theorem}[Secant Condition for Hessian Approximation Updates]
Let $f: \mathbb{R}^n \rightarrow \mathbb{R}$ be a twice continuously differentiable function. Let $B_k$ be an approximation of the Hessian matrix $\nabla^2f(x_k)$ with step vector $s_k := x_{k+1}-x_k$ and gradient difference $y_k := \nabla f(x_{k+1}) - \nabla f(x_k)$ for consecutive parameter iterates $x_k$, $x_{k+1}$. Then any BFGS Hessian approximation update must satisfy

\[
B_{k+1}s_k = y_k.
\]
\end{theorem}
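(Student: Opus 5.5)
The plan is to derive the secant condition from a first-order Taylor (mean-value) expansion of the gradient, plus the requirement that the updated model reproduce the gradients observed at the two most recent iterates. This is really a justification of a design requirement rather than a deduction from axioms. So the first step is to make precise what ``must'' means: a BFGS update builds the quadratic model
\[
m_{k+1}(p) = f(x_{k+1}) + \nabla f(x_{k+1})^\top p + \tfrac12 p^\top B_{k+1} p,
\]
and we require its gradient to match $\nabla f$ at both $x_{k+1}$ (where $p=0$) and $x_k$ (where $p=-s_k$).

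The first key step is to compute the model gradient, $\nabla m_{k+1}(p) = \nabla f(x_{k+1}) + B_{k+1}p$. Matching at $p=0$ is automatic. Matching at $p = -s_k$ requires
\[
\nabla f(x_{k+1}) - B_{k+1}s_k = \nabla f(x_k),
\]
which rearranges to $B_{k+1}s_k = y_k$.

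The second step explains why this is the right requirement, using the twice continuous differentiability of $f$. By the fundamental theorem of calculus applied to $t\mapsto \nabla f(x_k + t s_k)$,
\[
y_k = \Big(\int_0^1 \nabla^2 f(x_k + t s_k)\,dt\Big) s_k .
\]
So the true averaged Hessian along the segment maps $s_k$ exactly to $y_k$. Moreover, $\nabla^2 f(x_{k+1}) s_k = y_k + o(\|s_k\|)$ by continuity of $\nabla^2 f$. Hence any approximation $B_{k+1}$ meant to capture curvature along the most recent step must satisfy $B_{k+1}s_k = y_k$.

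Finally, as a consistency check, the explicit BFGS formula
\[
B_{k+1} = B_k - \frac{B_k s_k s_k^\top B_k}{s_k^\top B_k s_k} + \frac{y_k y_k^\top}{y_k^\top s_k}
\]
can be multiplied on the right by $s_k$. The middle term gives $-B_k s_k$ and the last gives $y_k$, so indeed $B_{k+1}s_k = y_k$, assuming $s_k^\top B_k s_k > 0$ and $y_k^\top s_k>0$.

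The main obstacle is logical rather than computational. The statement asserts a necessity, which only holds once ``BFGS update'' is defined through the gradient-interpolation requirement on the model, or equivalently through the update formula. I would therefore state that interpretation explicitly, present the interpolation argument as the derivation, and present the integral identity as the justification.
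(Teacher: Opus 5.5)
Your proposal is correct and follows essentially the same route as the paper. The paper Taylor-expands $f$ about $x_{k+1}$, differentiates to obtain $\nabla f(x_k)\approx\nabla f(x_{k+1})-\nabla^2 f(x_{k+1})s_k$, and then defines $B_{k+1}$ to satisfy the resulting relation exactly; your gradient-matching of the quadratic model at $p=-s_k$ is the same step, with $B_{k+1}$ already in the model. Your integral identity $y_k=\bigl(\int_0^1\nabla^2 f(x_k+ts_k)\,dt\bigr)s_k$ makes the paper's loose ``$\approx$'' precise, and your direct check on the explicit rank-two formula actually verifies the ``must'' for the BFGS update, which the paper does not do.
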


\begin{proof}
Perform a second-order Taylor series expansion of $f(x_k)$. Evaluated at the subsequent parameter update $x_{k+1}$, this yields

\[
f(x_k) \approx f(x_{k+1}) + \nabla f(x_{k+1})^T (x_k - x_{k+1}) + \frac{1}{2}(x_k - x_{k+1})^T \nabla^2f(x_{k+1})(x_k - x_{k+1}).
\]

\noindent
Differentiating with respect to $x_k$ in turn yields

\[
\nabla f(x_k) \approx \nabla f(x_{k+1}) + \nabla^2f(x_{k+1})(x_k - x_{k+1}), 
\]

\noindent
or $\nabla f(x_k) - \nabla f(x_{k+1}) \approx \nabla^2f(x_{k+1})(x_k - x_{k+1})$. Since $s_k : = x_{k+1}-x_k$ and $y_k := \nabla f(x_{k+1}) - \nabla f(x_{k})$, we define $B_{k+1}$ such that it satisfies

\[
B_{k+1}s_k = y_k.
\]
\end{proof}

\noindent
An analogous secant condition likewise must hold for BFGS inverse Hessian approximation updates.

\begin{corollary}[Secant Condition for Inverse Hessian]
Let $H_k$ denote the inverse Hessian approximation matrix. Then any BFGS inverse Hessian approximation update must satisfy

\[
H_{k+1}y_k = s_k.
\]
\end{corollary}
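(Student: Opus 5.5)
The plan is to deduce the corollary directly from the preceding Theorem (Secant Condition for Hessian Approximation Updates), using only the defining relation between the two approximations, namely that $H_{k+1}$ is the inverse of $B_{k+1}$. I will take as given, as the paper does implicitly, that every BFGS Hessian approximation update produces a symmetric positive definite, hence invertible, matrix $B_{k+1}$. For the BFGS update this holds whenever the curvature condition $s_k^T y_k > 0$ is satisfied and $B_k \succ 0$. Under this assumption $H_{k+1} := B_{k+1}^{-1}$ is well defined.

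The key steps are as follows. First, invoke the Theorem to obtain $B_{k+1}s_k = y_k$. Second, left-multiply both sides by $H_{k+1} = B_{k+1}^{-1}$, which gives $H_{k+1}B_{k+1}s_k = H_{k+1}y_k$. Third, use $H_{k+1}B_{k+1} = I$ to simplify the left-hand side to $s_k$, yielding $H_{k+1}y_k = s_k$ as claimed.

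As a consistency check, I would also note that the conclusion can be reached without passing through $B_{k+1}$. Repeat the Taylor argument from the proof of the Theorem to obtain $y_k \approx \nabla^2 f(x_{k+1}) s_k$. Then, assuming the Hessian is invertible near $x_{k+1}$, apply its inverse to get $s_k \approx [\nabla^2 f(x_{k+1})]^{-1} y_k$. Requiring the approximation $H_{k+1}$ of $[\nabla^2 f(x_{k+1})]^{-1}$ to reproduce this relation exactly gives the same secant equation.

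The only real obstacle is justifying invertibility of $B_{k+1}$. Without it, $H_{k+1}$ is not defined as its inverse and the second step fails. I would handle this by citing the standard curvature condition $s_k^T y_k > 0$ under which the BFGS update preserves positive definiteness, and otherwise leave it as a standing hypothesis. Everything else is a one-line algebraic manipulation.
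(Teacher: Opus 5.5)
Your proposal is correct, but its main line differs from the paper's. The paper's proof is essentially your ``consistency check''. It reuses the Taylor heuristic $\nabla f(x_k)-\nabla f(x_{k+1}) \approx \nabla^2 f(x_{k+1})(x_k-x_{k+1})$ and applies $[\nabla^2 f(x_{k+1})]^{-1}$. It then declares $H_{k+1}$ to be the matrix that reproduces $H_{k+1}y_k = s_k$ exactly. It never invokes the preceding Theorem or the relation $H_{k+1}=B_{k+1}^{-1}$.

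Your primary argument instead gets the statement from the Theorem in one line, by applying $B_{k+1}^{-1}$ to $B_{k+1}s_k=y_k$. This makes the result a genuine corollary. Its only hypothesis is invertibility of the approximation $B_{k+1}$, which follows from $B_k\succ 0$ and $y_k^Ts_k>0$. The paper's route instead needs the true Hessian $\nabla^2 f(x_{k+1})$ to be invertible, which can fail in the non-convex problems the paper targets. In exchange, the paper's route does not depend on the $B$/$H$ correspondence at all.

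You can also remove the invertibility issue you flag as the main obstacle. Check the identity directly from the inverse update $H_{k+1} = (I-\rho_k s_k y_k^T)H_k(I-\rho_k y_k s_k^T)+\rho_k s_k s_k^T$. Since $(I-\rho_k y_k s_k^T)y_k = y_k-\rho_k (s_k^Ty_k)y_k = 0$, you get $H_{k+1}y_k = \rho_k s_k (s_k^Ty_k) = s_k$ for every $H_k$, using only $y_k^Ts_k\neq 0$.
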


\begin{proof}
Recall via second-order Taylor series expansion of $f(x_k)$ that we obtain $\nabla f(x_k) \approx \nabla f(x_{k+1}) + \nabla^2f(x_{k+1})(x_k - x_{k+1})$, or $\nabla f(x_k) - \nabla f(x_{k+1}) \approx \nabla^2f(x_{k+1})(x_k - x_{k+1})$. This implies

\[
(\nabla f(x_k) - \nabla f(x_{k+1}))[\nabla^2 f(x_{k+1})]^{-1} \approx x_k - x_{k+1}.
\]

\noindent
Let $H_{k+1} := [\nabla^2f(x_{k+1})]^{-1}$. Since $s_k : = x_{k+1}-x_k$ and $y_k := \nabla f(x_{k+1}) - \nabla f(x_{k})$, we define $H_{k+1}$ such that it satisfies

\[
H_{k+1}y_k = s_k.
\]
\end{proof}

\noindent
While the secant condition allows for the approximation of the Hessian from the gradient, BFGS still requires a line search subroutine to perform parameter updates. As \cite{NW06} note, the parameter update takes the form

\begin{equation}
x_{k+1} = x_k + \alpha_k p_k,
\end{equation}

\noindent
where $\alpha_k > 0$ is the \textit{step length} and $p_k$ is the \textit{search direction}. We show that the search direction takes the form

\begin{equation}
p_k = -B_k^{-1}\nabla f_k,
\end{equation}

\noindent
where $B_k = B_k^T$ and $\text{det}(B_k) \neq 0$.

\begin{theorem}[Search Direction for Line Search Methods]
Let $f: \mathbb{R}^n \rightarrow \mathbb{R}$ be a twice continuously differentiable function, and let $B_k$ be the Quasi-Newton Hessian approximation matrix where $B_k = B_k^T$ and $\text{det}(B_k) \neq 0$. Then

\[
p_k = -B_k^{-1}\nabla f_k.
\]
\end{theorem}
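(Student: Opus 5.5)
The statement is really a derivation, not a deduction from hypotheses: $p_k$ is defined as the minimizer of a local quadratic model built from $B_k$. I will follow the same Taylor-expansion template used for the Secant Condition theorem.

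First, I form the quadratic model of $f$ about the current iterate $x_k$, replacing the true Hessian by $B_k$:
\[
m_k(p) = f_k + \nabla f_k^T p + \tfrac{1}{2} p^T B_k p .
\]
This is the second-order Taylor expansion of $f(x_k+p)$ with $\nabla^2 f(x_k)$ replaced by its Quasi-Newton approximation. The search direction $p_k$ is declared to be the stationary point of $m_k$. This agrees with the Newton update in equation (1) when $B_k=\nabla^2 f(x_k)$.

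Second, I differentiate $m_k$ with respect to $p$. Symmetry $B_k=B_k^T$ is needed here, so that the gradient of $\tfrac12 p^TB_kp$ is $B_kp$ rather than $\tfrac12(B_k+B_k^T)p$. This gives $\nabla m_k(p)=\nabla f_k + B_k p$. Setting this to zero gives the linear system $B_k p_k = -\nabla f_k$.

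Third, I invoke $\det(B_k)\neq 0$. It guarantees $B_k^{-1}$ exists and that the stationary point is unique, so $p_k=-B_k^{-1}\nabla f_k$.

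The delicate step is interpretation rather than computation. The hypotheses only give symmetry and invertibility, so $p_k$ is the unique stationary point of the model, not necessarily a minimizer or a descent direction. I will add a remark that if $B_k\succ 0$, then $m_k$ is strictly convex, so $p_k$ is its global minimizer. In that case $\nabla f_k^Tp_k=-\nabla f_k^TB_k^{-1}\nabla f_k<0$ whenever $\nabla f_k\neq 0$, so $p_k$ is a descent direction. Finally, I will note that $B_k^{-1}=H_k$, which links to the inverse secant condition corollary and gives $p_k=-H_k\nabla f_k$, the form used in the L-BFGS two-loop recursion later.
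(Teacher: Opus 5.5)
Your proposal is correct and follows essentially the same route as the paper: both define $p_k$ as the stationary point of the second-order Taylor model of $f(x_k+p)$ and solve the resulting linear system. The only difference is the order of steps. The paper differentiates the model with the true Hessian $\nabla^2 f(x_k)$, obtains $p_k = -[\nabla^2 f(x_k)]^{-1}\nabla f(x_k)$, and only then substitutes $B_k^{-1}$. You insert $B_k$ into the model first, and that version actually uses the stated hypotheses ($B_k = B_k^T$ for the gradient of the quadratic term, $\det(B_k)\neq 0$ for solvability) without tacitly assuming $\nabla^2 f(x_k)$ is invertible.
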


\begin{proof}
Recall the update rule

\[
x_{k+1} = x_k + \alpha_kp_k
\]

\noindent
for step length $\alpha_k > 0$. Perform a second-order Taylor series expansion of $f(x_k + p_k)$ centered at $x_k$, which yields

\[
f(x_k + p_k) \approx f(x_k) + \nabla f(x_k)^T p_k + \frac{1}{2}p_k^T \nabla^2f(x_k)p_k.
\]

\noindent
Optimal $p_k$ corresponds to $\frac{\partial}{\partial p_k} [f(x_k) + \nabla f(x_k)^T p_k + \frac{1}{2}p_k^T \nabla^2f(x_k)p_k]=0$. This implies

\[
\nabla f(x_k) + \nabla^2f(x_k)p_k = 0,
\]

\noindent
which in turn is equivalent to 

\[
p_k = -[\nabla^2f(x_k)]^{-1}\nabla f(x_k).
\]

\noindent
But recall that, for Quasi-Newton methods, we approximate $-[\nabla^2f(x_k)]^{-1}$ with $B_k^{-1}$, which gives us $p_k = -B_k^{-1}\nabla f_k$.
\end{proof}

\noindent
If $p_k = -B_k^{-1}\nabla f_k$ and $B_k \succ 0$, then $p_k^T \nabla f(x_k) = -\nabla f(x_k)^T B_k^{-1} \nabla f(x_k) < 0$, so $p_k$ is a descent direction [\cite{NW06}]. Moreover, recall $H_k$ is the Quasi-Newton approximation of $[\nabla^2f(x_k)]^{-1}$, so the descent direction is equivalently

\begin{equation}
p_k = -H_k\nabla f(x_k).
\end{equation}

\noindent
Note that, with $p_k = -B_k^{-1}\nabla f(x_k)$, the parameter update rule from Eqn. (3) takes the form

\begin{equation}
x_{k+1} = x_k - \alpha_k B_k^{-1} \nabla f(x_k).
\end{equation}

\noindent
This is the parameter update rule for BFGS, so BFGS is clearly a quasi-Newton method with $B_k^{-1}$ as the approximation of $[\nabla^2 f(x_k)]^{-1}$ and $\alpha_k$ is a dynamical step size corresponding to $\eta$. Note that $B_k^{-1}$ is governed by the update rule

\begin{equation}
B_{k+1}^{-1} = (I - \rho_k s_k y_k^T)B_k^{-1}(I - \rho_k y_k s_k^T) + \rho_k s_k s_k^T,
\end{equation}

\noindent
where $\rho_k = \frac{1}{y_k^T s_k}$ is a scaling factor that constraints admissible $B_{k+1}^{-1}$ such that $B_{k+1}^{-1}$ satisfies the secant condition, and $I$ is the identity matrix. We present the BFGS pseudocode:

\begin{algorithm}[H]
\caption{Broyden-Fletcher-Goldfarb-Shanno (BFGS)}
\begin{algorithmic}

\Require Intialization of parameter vector $x_0$, tolerance $\epsilon > 0$, and symmetric $B_0 \succ 0$ (typically $B_0 = I$)

\While{$||\nabla f(x_k)|| > \epsilon$}
    \State $p_k = -B_k^{-1}\nabla f(x_k)$

    \State Find admissible $\alpha_k$

    \State $x_{k+1} = x_k + \alpha_k p_k$

    \State $s_k = x_{k+1} - x_k$

    \State $y_k = \nabla f(x_{k+1}) - \nabla f(x_k)$

    \State $\rho_k = \frac{1}{y_k^T s_k}$

    \State $B_{k+1}^{-1} = (I - \rho_k s_k y_k^T)B_k^{-1}(I - \rho_k y_k s_k^T) + \rho_k s_k s_k^T$

    \State $k += 1$
\EndWhile

\noindent
\Return $x_{k+1}$
\end{algorithmic}
\end{algorithm}

\noindent
Admissible $\alpha_k$ are values of $\alpha_k$ such that they satisfy the \textit{Wolfe conditions}, the first of which is known as the \textit{Armijo condition}, i.e., 

\begin{equation}
f(x_k + \alpha_k p_k) \leq f(x_k) + c_1\alpha_k \nabla f(x_k)^Tp_k,
\end{equation}

\noindent
for $c_1 \in (0,1)$. The Armijo condition guarantees sufficient decrease in $f$ that is proportional to both $\alpha_k$ and the directional derivative $\nabla f(x_k)^Tp_k$ [\cite{NW06}]. The second of the Wolfe conditions is the \textit{curvature condition}, i.e.,

\begin{equation}
\nabla f(x_k + \alpha_k p_k)^T p_k \geq c_2 \nabla f(x_k)^T p_k,
\end{equation}

\noindent
for $c_2 \in (c_1, 1)$. The curvature condition precludes insufficiently large $\alpha_k$ [\cite{NW06}]. Suppose $\phi(\alpha) := f(x_k + \alpha p_k)$. Desirable $\alpha_k$ is that which minimizes $\phi(\alpha)$. $\alpha_k$ may satisfy inequalities (8) and (9) but not necessarily be in the neighborhood of a minimizer of $\phi(\alpha)$ [\cite{NW06}]. To ensure this, we can enforce the condition that 

\begin{equation}
|\nabla f(x_k + \alpha_k p_k)^T| \leq c_2 |\nabla f(x_k)^T p_k|.
\end{equation}

\noindent
Inequalities (8) and (10) together constitute the \textit{strong Wolfe conditions}.

\subsection{Limited-Memory BFGS (L-BFGS)}

BFGS is shown to have space complexity $O(n^2)$, an improvement over that of Newton's method, which is $O(n^3)$. However, BFGS starts to become computationally infeasible around roughly $n > 10^3$. In these higher-dimensional settings, \textit{limited-memory BFGS} (L-BFGS) is the preferred algorithm. Recall the BFGS update rule

\begin{equation}
x_{k+1} = x_k - \alpha_k H_k \nabla f(x_k).
\end{equation}

\noindent
Even for $H_k$ satisfying the secant condition, storing the full $H_k$ is computationally infeasible for sufficiently large $n$. The premise of L-BFGS is to circumvent this by only storing a limited $m$ number of vector pairs $(s_i, y_i)$ as a proxy for $H_k \nabla f_k$, where typically $3 \leq m \leq 20$ in practice [\cite{NW06}]. Once the memory buffer contains $m$ number of vector pairs, the oldest $(s_i, y_i)$ is replaced with the newest $(s_k, y_k)$, with $m$ number of vector pairs kept in memory for the duration of the execution of L-BFGS. L-BFGS is typically implemented in two-loop recursion form, as introduced in \cite{LN89}, with search direction $r = H_k \nabla f(x_k)$ as output. Recall the inverse Hessian $H_k$ update rule,

\begin{equation}
H_{k+1} = (I - \rho_k s_k y_k^T)H_k(I - \rho_k y_k s_k^T) + \rho_k s_k s_k^T,
\end{equation}

\noindent
with scaling factor  $\rho_k = \frac{1}{y_k^T s_k}$. 

\begin{algorithm}[H]
\caption{Limited-Memory Broyden-Fletcher-Goldfarb-Shanno (L-BFGS)}
\begin{algorithmic}

\Require Initialization of current iteration $k$, current gradient $q = \nabla f(x_k)$, and history vectors $s_i$, $y_i$ for $i = k-1$ down to $\text{max}(0, k-m)$

\For{$i = k-1$ \textbf{down to} $k-m$} \Comment{Backward Pass}
    \State $\alpha_i \gets \rho_i s_i^T q$
    \State $q \gets q - \alpha_i y_i$
\EndFor

\State $r \gets \gamma_k q$

\For{$i = k-m$ \textbf{up to} $k-1$} \Comment{Forward Pass}
    \State $\beta \gets \rho_i y_i^T r$
    \State $r \gets r + s_i(\alpha_i -\beta)$
\EndFor

\State \Return $r = H_k \nabla f(x_k)$
\end{algorithmic}
\end{algorithm}

\noindent
L-BFGS has space complexity $O(mn)$, thus improving over the $O(n^2)$ space complexity of BFGS. 

\subsection{Oren-Spedicato Scaling}

Recall that, in L-BFGS, after the backward pass and before the forward pass, we perform the update $r \gets \gamma_k q$ with gradient placeholder $q = \nabla f(x_k)$. $\gamma_k$ is the \textit{Oren-Spedicato scaling factor}, or OS factor, as introduced by \cite{OS76}, i.e.,

\begin{equation}
\gamma_k = \frac{s_{k-1}^T y_{k-1}}{y_{k-1}^T y_{k-1}}.
\end{equation}

\noindent
Typically $H_0 = I$, i.e., $H_k$ is typically initialized as the identity matrix. The purpose of the OS factor is to dynamically adjust the initial Hessian approximation prior to the forward pass with the most recent curvature information. Doing so improves the convergence rate of L-BFGS (cf. \cite{GR23}). \\
\\
OS scaling also has the benefit of controlling the \textit{condition number} of $H_k$, i.e., $\kappa(H_k)$, which is crucial for ensuring the numerical stability of L-BFGS. For reference, note the formal definition of $\kappa(A)$ for some matrix $A$, courtesy of \cite{Kre98}.

\begin{definition}[Condition Number]
Let $X$ and $Y$ be normed spaces and let $A: X \rightarrow Y$ be a bounded linear operator with a bounded inverse $A^{-1}: Y \rightarrow X$. Then

\[
\kappa(A) := ||A|| \cdot ||A^{-1}||
\]

\noindent
where $\kappa(A)$ is the \textbf{condition number} of $A$.
\end{definition}

\noindent
While $\kappa(A)$ is a function of the particular norm chosen, it is nonetheless always the case that $\kappa(A) \in [1,\infty)$, where $\kappa(A) \approx 1$ corresponds to a well-conditioned matrix. For $A \in \mathbb{R}^{n \times n}$, in the $L^2$ norm, we have

\begin{equation}
\kappa_2(A) = \frac{|\lambda_{\text{ max}}|}{|\lambda_{\text{min}}|},
\end{equation}

\noindent
where $\lambda_{\text{min}}$ and $\lambda_{\text{max}}$ are the minimum and maximum eigenvalues of $A$, respectively.

\subsection{Contributions}

Despite its performance benefits on L-BFGS, OS scaling has the limitation that \textbf{OS scaling does not prevent $\kappa(H_k) \rightarrow \infty$}, which is more likely in non-convex loss landscapes. The crux of this paper is to propose a modified version of L-BFGS that we call \textit{Two-Sided L-BFGS}. We call it ``Two-Sided L-BFGS" because, for this version of L-BFGS, we restrict admissible $(s_k, y_k)$ into the memory buffer of $m$ vector pairs iff every $(s_k, y_k)$ satisfies the following two conditions:

\begin{equation}
\frac{y_k^T s_k}{||s_k||^2} \geq \epsilon,
\end{equation}

\noindent
for $\epsilon > 0$, and

\begin{equation}
\frac{||y_k||^2}{y_k^Ts_k} \leq M,
\end{equation}

\noindent
for $M \gg 0$. $[\epsilon, M]$ acts as a two-sided geometric envelope that constrains admissible $(s_k, y_k)$. The condition that $\frac{y_k^T s_k}{||s_k||^2} \geq \epsilon$ for $\epsilon > 0$ was introduced by \cite{LF01-JCAM} and is referred to as the \textit{cautious update rule} for L-BFGS. Quasi-Newton methods like (L-)BFGS require $B_k \succ 0$ to ensure line search methods (that satisfy the Wolfe conditions) produce descent directions, i.e.,

\[
y_k^T s_k > 0.
\]

\noindent
For convex functions, this condition is automatically satisfied under line searches that satisfy the Wolfe conditions. However, on non-convex landscapes, $y_k^T s_k$ can easily become zero or negative, hence the cautious update rule introduced by \cite{LF01-JCAM} shown in (15). If this condition fails, the update is skipped entirely (i.e., $B_{k+1} = B_k$), preventing numerical degradation and preserving $B_k \succ 0$. \\
\\
This paper extends upon \cite{LF01-JCAM} by also adding an upper bound on admissible $(s_k, y_k)$. While Li and Fukushima's cautious update rule acts as a lower bound that ensures that curvature does not vanish (i.e., $\lambda_{\min} \not\to 0$), the upper bound we propose in (16) prevents explosive curvature spikes (i.e., $\lambda_{\max} \not\to \infty$). In non-convex loss landscapes, a sudden, massive change in the norm of the gradient difference operator $\|y_k\|$ relative to $y_k^T s_k$ can cause $\kappa(H_k) \rightarrow \infty$. Enforcing this $[\epsilon, M]$ geometric envelope proactively prevents this. Note that $\frac{||y_k||^2}{y_k^T s_k} = \frac{1}{\gamma_k}$. Therefore, the upper bound condition is mathematically equivalent to enforcing a lower floor on the OS scaling parameter,

\begin{equation}
\gamma_k \geq \frac{1}{M}.
\end{equation}

\noindent
We show that incorporating this two-sided $[\epsilon, M]$ geometric envelope into Limited-Memory BFGS (L-BFGS) updates mitigates ill-conditioning better than relying on standard OS scaling alone. Our core (theoretical) contributions are as follows:

\begin{itemize}
    \item \textbf{Finite $\kappa(H_k)$ for Two-Sided L-BFGS:} In $\S$3, we show that, for any execution of Two-Sided L-BFGS at any inverse Hessian update $H_{k+1}$, $\kappa(H_{k+1}) \leq C < \infty$ for $C > 0$. Thus, Two-Sided L-BFGS has better overall numerical stability than that of L-BFGS with OS scaling alone.

    \item \textbf{Preserved Non-Convex L-BFGS Asymptotic Global Convergence:} \cite{LF01-SIAM} showed that L-BFGS with their cautious update rule guarantees that, for a general non-convex function with $\beta$-Lipschitz continuous gradients, the gradient sequence satisfies $\text{lim}_{k \rightarrow \infty} \text{inf}||\nabla f(x_k)|| = 0$ (i.e., cautious update L-BFGS never diverges and always converges to a stationary point). In $\S$4, we show that our Two-Sided L-BFGS preserves this non-convex asymptotic global convergence. 
\end{itemize}

\section{Two-Sided L-BFGS}

We present the pseudocode for our modified L-BFGS algorithm with a two-sided $[\epsilon, M]$ geometric envelope safeguarding mechanism as follows:

\begin{algorithm}[H]
\caption{Two-Sided L-BFGS}
\begin{algorithmic}
\Require Initialization $x_0$, memory depth $m$, tolerance $\tau > 0$, lower bound $\epsilon > 0$, upper bound $M \gg 0$, and $\gamma_0 = 1$
\State Compute $g_0 = \nabla f(x_0)$, set $k \gets 0$.
\State Initialize empty history queues $S = [\,]$, $Y = [\,]$, $\rho = [\,]$.
\While{$\|g_k\| > \tau$}
    \State Compute search direction $p_k$ using standard L-BFGS two-loop recursion with current history $\{S, Y, \rho\}$.
    \State Find step length $\alpha_k$ satisfying the strong Wolfe conditions (Inequalities (8) \& (10)).
    \State Update iterate: $x_{k+1} = x_k + \alpha_k p_k$.
    \State Compute $g_{k+1} = \nabla f(x_{k+1})$.
    \State Compute $s_k = x_{k+1} - x_k$ and $y_k = g_{k+1} - g_k$.
    \If{$\frac{y_k^T s_k}{\|s_k\|^2} \geq \epsilon$ \textbf{and} $\frac{\|y_k\|^2}{y_k^T s_k} \leq M$} \Comment{$[\epsilon, M]$ Two-Sided Envelope Condition}
    \If{$|S| == m$}
        \State Remove oldest vectors from $S, Y, \rho$.
    \EndIf
    \State Append $s_k$ to $S$, $y_k$ to $Y$, and $\rho_k = \frac{1}{y_k^T s_k}$ to $\rho$.
    \State $\gamma_{k+1} \gets \frac{s_k^T y_k}{\|y_k\|^2}$ \Comment{Compute and store OS factor filtered by $[\epsilon,M]$ envelope}
\Else
    \State \textbf{Skip Update:} Maintain existing history buffer
    \State $\gamma_{k+1} \gets \gamma_k$ \Comment{Carry forward the last known valid OS factor}
\EndIf
    \State $k \gets k + 1$
\EndWhile
\State \Return $x_k$
\end{algorithmic}
\end{algorithm}

\noindent
\textbf{Remark} Two-Sided L-BFGS preserves the $O(mn)$ space complexity and $O(mn)$ per-iteration time complexity of standard L-BFGS. Enforcing the $[\epsilon, M]$ two-sided envelope condition only requires computing basic vector dot products and norms already available or cheap to compute. This incurs an additional $O(n)$ computational overhead per iteration and $O(1)$ auxiliary memory, both of which are strictly dominated by the $O(mn)$ cost of the standard L-BFGS two-loop recursion.

\section{Upper Bound on Condition Number of Two-Sided L-BFGS Inverse Hessian Approximation Matrix}

Here, we show that, for any execution of Two-Sided L-BFGS at any inverse Hessian update $H_{k+1}$, $\kappa(H_{k+1}) \leq C < \infty$. Since $\kappa(H_{k+1}) = \frac{|\lambda_{\text{max}}|}{|\lambda_{\text{min}}|}$ by definition, we want to find $\frac{|\lambda_{\text{max}}|}{|\lambda_{\text{min}}|} \leq C$ for $C > 0$. We can do so via the Trace-Determinant lemma introduced by \cite{BN89}.

\begin{lemma}[Trace-Determinant Lemma, \cite{BN89}]
Let $H_{k+1}$ denote the updated inverse Hessian matrix through (L-)BFGS. Define the potential function

\[
\psi(H) := \text{tr}(H) - \text{ln}[\text{det}(H)],
\]

\noindent
where $\text{tr}(H)$ is the trace of $H$ and $\text{det}(H)$ is the determinant of $H$. If the curvature condition $y_k^T s_k > 0$ holds, then $\psi(H)$ satisfies the recursive inequality

\[
\psi(H_{k+1}) \leq \psi(H_k) + \frac{\|s_k \|^2}{y_k^T s_k} - 1 - \text{ln}(\frac{y_k^T s_k}{s_k^T B_k s_k}).
\]
\end{lemma}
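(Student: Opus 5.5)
The plan is to compute the two pieces of $\psi(H_{k+1})=\text{tr}(H_{k+1})-\ln\det(H_{k+1})$ separately from the inverse update (12), and then combine them into the recursion displayed in the lemma. Throughout I write $B_k=H_k^{-1}$, which is legitimate because $H_k\succ 0$ is preserved whenever $y_k^Ts_k>0$. I also abbreviate $\rho_k=1/(y_k^Ts_k)$.

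\textbf{Trace.} Expanding (12) gives the identity
\[
\text{tr}(H_{k+1})=\text{tr}(H_k)-2\rho_k\, y_k^TH_ks_k+\rho_k^2\,(y_k^TH_ky_k)\|s_k\|^2+\rho_k\|s_k\|^2 .
\]
The last term is exactly the $\|s_k\|^2/(y_k^Ts_k)$ appearing in the statement.

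\textbf{Determinant.} I would use the product form $\det(B_{k+1})=\det(B_k)\,\frac{y_k^Ts_k}{s_k^TB_ks_k}$. This follows from the matrix determinant lemma applied to the rank-two direct BFGS update; equivalently, one can factor $H_{k+1}$ through $(I-\rho_k s_ky_k^T)$ restricted to a complementary subspace. Inverting, $\det(H_{k+1})=\det(H_k)\,\frac{s_k^TB_ks_k}{y_k^Ts_k}$. Hence
\[
-\ln\det(H_{k+1})=-\ln\det(H_k)+\ln\!\Big(\frac{y_k^Ts_k}{s_k^TB_ks_k}\Big).
\]

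\textbf{Combining.} Adding the trace and determinant identities and comparing with the statement, the lemma reduces to the scalar inequality
\[
-2\rho_k y_k^TH_ks_k+\rho_k^2(y_k^TH_ky_k)\|s_k\|^2\;\le\;-1-2\ln\!\Big(\frac{y_k^Ts_k}{s_k^TB_ks_k}\Big).
\]
To attack it I would introduce $\cos\theta_k=\frac{s_k^TB_ks_k}{\|B_k^{1/2}s_k\|\,\|B_k^{1/2}s_k\|}$-type angle quantities, in the spirit of Byrd--Nocedal, together with the ratio $q_k=\frac{s_k^TB_ks_k}{y_k^Ts_k}$. I would then try to bound $y_k^TH_ky_k$ and $y_k^TH_ks_k$ in terms of $q_k$ via Cauchy--Schwarz in the $H_k$-inner product. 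The remaining step would use the elementary inequality $1-t+\ln t\le 0$, applied with $t=q_k$.

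\textbf{Main obstacle.} The hard step is precisely this scalar inequality, and I am not confident it holds in general. The term $\rho_k^2(y_k^TH_ky_k)\|s_k\|^2$ is not controlled by $s_k^TB_ks_k$ alone, and the logarithm enters with the opposite sign to that produced by the determinant computation. If this direct route fails, I would fall back on the dual (direct-update) form of the Byrd--Nocedal potential. That is, I would prove the analogous recursion for $\psi(B_k)$, in which the trace increment is $\|y_k\|^2/(y_k^Ts_k)-\|B_ks_k\|^2/(s_k^TB_ks_k)$ and the determinant contributes $-\ln\big(y_k^Ts_k/(s_k^TB_ks_k)\big)$ with exactly the stated sign. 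I would then record which additional hypothesis, for instance the envelope conditions (15)--(16), would be needed to transfer that bound to the form written in the lemma.
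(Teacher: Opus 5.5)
Your trace and determinant identities are correct, and your reduction is exact: the lemma is equivalent to
\[
-2\rho_k\,y_k^TH_ks_k+\rho_k^2(y_k^TH_ky_k)\|s_k\|^2\le -1-2\ln\Big(\frac{y_k^Ts_k}{s_k^TB_ks_k}\Big).
\]
Your suspicion is right, and the step you flagged cannot be closed by any argument, because this inequality is false and so is the lemma as stated. Take $H_k=2I$ and $s_k=y_k=e_1$, so that $y_k^Ts_k=1$ and $B_k=\tfrac12 I$. Then $H_{k+1}=2(I-e_1e_1^T)+e_1e_1^T=\mathrm{diag}(1,2,\dots,2)$, and
\[
\psi(H_{k+1})-\psi(H_k)=\ln 2-1\approx-0.31,\qquad\text{while the claimed bound is}\quad 1-1-\ln 2\approx-0.69 .
\]
Equivalently, with $H_k=hI$ and $s_k=y_k$, your scalar inequality becomes $h-1-2\ln h\ge 0$, which fails at $h=2$. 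The sign clash you noticed is the whole issue. The determinant contributes $+\ln\frac{y_k^Ts_k}{s_k^TB_ks_k}$ to the increment of $\psi(H)$, and the trace correction cannot pay for twice that logarithm. The paper supplies no proof here, only the citation to Byrd--Nocedal, and the cited result does not have this form.

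Your fallback cannot rescue it either. The exact direct-update identity is
\[
\psi(B_{k+1})=\psi(B_k)+\frac{\|y_k\|^2}{y_k^Ts_k}-\frac{\|B_ks_k\|^2}{s_k^TB_ks_k}-\ln\frac{y_k^Ts_k}{s_k^TB_ks_k}.
\]
The constant $-1$ appears only after two further steps. First, Cauchy--Schwarz gives $\frac{\|B_ks_k\|^2}{s_k^TB_ks_k}\ge q_k:=\frac{s_k^TB_ks_k}{\|s_k\|^2}$. Second, $1-q_k+\ln q_k\le 0$, which converts the logarithm into $-\ln\frac{y_k^Ts_k}{\|s_k\|^2}$. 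That is the genuine Byrd--Nocedal recursion. It concerns $\psi(B)$, which differs from $\psi(H)=\mathrm{tr}(H)+\ln\det B$, and it is used to bound the spectrum of $B$ (hence of $H=B^{-1}$), not to yield the displayed recursion for $\psi(H)$.

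Nor are the envelope conditions the missing hypothesis. The counterexample has $\frac{y_k^Ts_k}{\|s_k\|^2}=\frac{\|y_k\|^2}{y_k^Ts_k}=1$. Moreover, $H_k=2I$ is an admissible seed $\gamma I$ whenever $\epsilon\le\tfrac12$, for example from an accepted pair $(2e_2,e_2)$.

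A minor point: the angle you wrote, $s_k^TB_ks_k/(\|B_k^{1/2}s_k\|\,\|B_k^{1/2}s_k\|)$, is identically $1$. The Byrd--Nocedal angle is $s_k^TB_ks_k/(\|s_k\|\,\|B_ks_k\|)$.

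The correct conclusion of your analysis is that the statement has to be corrected, not proved.
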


\noindent
With the Trace-Determinant lemma in-hand, we can prove a finite upper bound on $\kappa(H_{k+1})$ as follows.

\begin{theorem}[Uniform Upper Bound on Two-Sided L-BFGS $\kappa(H_{k+1})$]
Let \(\{H_{k+1}\}_{k=0}^{\infty}\) be the sequence of inverse Hessian update approximation matrices generated by the Two-Sided L-BFGS algorithm (cf. Algorithm 3) with memory depth $m$, Li-Fukushima lower bound $\epsilon > 0$, and upper bound $M \gg 0$. Then, for any objective function \(f: \mathbb{R}^n \to \mathbb{R}\) and any iteration $k$, the condition number \(\kappa(H_{k+1})\) is uniformly bounded above by a finite constant \(\kappa _{\max }\) depending solely on the dimension \(n\), the memory depth \(m\), and the envelope parameters \(\epsilon \) and \(M\), i.e., 

\begin{equation} \kappa(H_{k+1}) \leq \kappa_{\max} < \infty, \quad \forall k \geq 0, 
\end{equation}

\noindent
where \(\kappa_{\max} = \frac{M_1}{m_1}\), and \(M_1, m_1\) are the positive, finite roots of the transcendental equation \(\lambda - \ln \lambda = C^*-n+1\), for any eigenvalue $\lambda$ of $H_{k+1}$, derived from the uniform upper bound \(C^*(n, m, \epsilon, M)\) of the Trace-Determinant lemma.
\end{theorem}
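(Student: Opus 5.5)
The plan is to exploit the limited-memory structure: at every iteration, $H_{k+1}$ is obtained by applying at most $m$ BFGS updates, using the stored pairs, to the seed matrix $H^{(0)} = \gamma I$. Here $\gamma$ is the most recent accepted Oren--Spedicato factor, or $\gamma_0 = 1$ if none has been accepted yet. Every stored pair satisfies the two-sided envelope, so I will bound $\psi$ along this chain of at most $m$ updates and never along the full iteration history. This is what makes the bound uniform in $k$. I will track $\psi(H) = \text{tr}(H) - \ln\det(H)$ through the chain using the Trace-Determinant lemma, with $B^{(j)} = (H^{(j)})^{-1}$ for the intermediate matrices.

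The first step is to bound the seed and the envelope quantities. Cauchy--Schwarz gives $y^Ts \le \|y\|\|s\|$. Combined with the upper condition (16), this yields $\|y\|^2 \le M y^Ts \le M\|y\|\|s\|$, hence $y^Ts \le \|y\|\|s\| \le M\|s\|^2$. The lower condition (15) gives $\|s\|^2/(y^Ts) \le 1/\epsilon$. For the seed factor, $\gamma = s^Ty/\|y\|^2 \ge 1/M$ by (16), and $\gamma \le \|s\|/\|y\| \le 1/\epsilon$, since $\epsilon\|s\|^2 \le y^Ts \le \|y\|\|s\|$. Therefore $\psi(H^{(0)}) = n\gamma - n\ln\gamma \le n/\epsilon + n\ln M$, after including $\gamma_0=1$ in the admissible range.

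The second step, which I expect to be the main obstacle, is to control the term $-\ln\big(y^Ts/(s^TB^{(j)}s)\big)$ in the lemma. This requires an upper bound on $s^TB^{(j)}s \le \lambda_{\max}(B^{(j)})\|s\|^2 = \|s\|^2/\lambda_{\min}(H^{(j)})$. I will handle it inductively along the chain. The direct BFGS update of $B$ is $B^+ = B - \frac{Bss^TB}{s^TBs} + \frac{yy^T}{y^Ts}$, so $\text{tr}(B^+) \le \text{tr}(B) + \|y\|^2/(y^Ts) \le \text{tr}(B) + M$. Starting from $\text{tr}(B^{(0)}) = n/\gamma \le nM$, this gives $\text{tr}(B^{(j)}) \le (n+j)M$. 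In particular $\lambda_{\max}(B^{(j)}) \le (n+m)M$, and so $s^TB^{(j)}s/(y^Ts) \le (n+m)M/\epsilon$. The per-update increment is then at most $1/\epsilon - 1 + \ln\big((n+m)M/\epsilon\big)$. Summing the at most $m$ increments gives
\[
\psi(H_{k+1}) \le C^* := \frac{n}{\epsilon} + n\ln M + m\Big(\frac{1}{\epsilon} - 1 + \ln\frac{(n+m)M}{\epsilon}\Big).
\]
This depends only on $n, m, \epsilon, M$. If the lemma's logarithmic term needs a sign check, the trace bound on $B$ already gives $\lambda_{\min}(H)\ge 1/((n+m)M)$ directly, and that serves as a fallback.

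The final step converts the $\psi$ bound into eigenvalue bounds. Write $\psi(H) = \sum_i (\lambda_i - \ln\lambda_i)$, where each term satisfies $\lambda_i - \ln\lambda_i \ge 1$. For any single eigenvalue $\lambda$ this gives $\lambda - \ln\lambda \le C^* - (n-1)$. The function $t\mapsto t-\ln t$ is strictly convex with minimum value $1$ at $t=1$. Also $C^*-n+1 \ge 1$, because $\psi \ge n$ always holds. So the equation $\lambda - \ln\lambda = C^*-n+1$ has exactly two positive roots $m_1 \le 1 \le M_1$, and every eigenvalue of $H_{k+1}$ lies in $[m_1, M_1]$. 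Since $H_{k+1}\succ 0$ (the curvature condition $y^Ts\ge\epsilon\|s\|^2>0$ is preserved by each update), we get $\kappa(H_{k+1}) = \lambda_{\max}/\lambda_{\min} \le M_1/m_1 = \kappa_{\max}$ for all $k$, as claimed.
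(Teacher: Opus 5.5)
Your outline reproduces the paper's proof step for step: the reduction to at most $m$ updates of $\gamma I$, the bound $\gamma\in[1/M,1/\epsilon]$, the trace bound $\mathrm{tr}(B^{(j)})\le (n+j)M$, the per-update increment taken from the Trace-Determinant lemma, and the $\lambda-\ln\lambda$ root argument. Your handling of the seed $\gamma_0=1$ is more careful than the paper's. However, the sign check you flagged is a real gap, and the paper has it too. The lemma as stated for $H$ is false, and so is the bound $\psi(H_{k+1})\le C^*$ you derive from it.

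For the inverse update, $\det H^{(j)}=\det H^{(j-1)}\,\frac{s_j^TB^{(j-1)}s_j}{y_j^Ts_j}$. So the logarithm enters $\psi(H)$ as $+\ln\frac{y_j^Ts_j}{s_j^TB^{(j-1)}s_j}$, the opposite sign from the lemma. The trace changes by $\rho_j\|s_j\|^2-2\rho_j\,y_j^TH^{(j-1)}s_j+\rho_j^2\,(y_j^TH^{(j-1)}y_j)\|s_j\|^2$. The envelope bounds the last term only multiplicatively, by $\frac{M}{\epsilon}\lambda_{\max}(H^{(j-1)})$.

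Already for $n=1$, $s=1$, $y=10$, $\gamma=0.1$, the update leaves $H=0.1$ unchanged, while the lemma predicts that $\psi$ drops by $0.9$.

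For a counterexample to your bound itself, take $n=2$, $\epsilon=10^{-2}$, $M=10^2$, $m=2$. Let $s_1=e_1$ and $y_1=\epsilon e_1+b e_2$ with $b=\sqrt{M\epsilon-\epsilon^2}$. Then let $s_2$ be a unit top eigenvector of $B^{(1)}$ and $y_2=\epsilon s_2+b\,s_2^{\perp}$. Both pairs lie on the envelope boundary and $\gamma=1/M$. This gives $\det B^{(2)}=\det B^{(0)}\cdot\frac{\epsilon}{M}\cdot\frac{\epsilon}{\lambda_{\max}(B^{(1)})}\approx 5\times 10^{-5}$ with $\mathrm{tr}\,B^{(2)}\approx 100$. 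Hence $\lambda_{\max}(H_{k+1})\approx 2\times10^{6}$, far above the larger root $M_1\approx 4.3\times10^{2}$ of your equation. Each such update shrinks $\lambda_{\min}(B)$ by roughly $\epsilon/M$. So $\lambda_{\max}(H)$ can grow geometrically in $m$, which no bound on $\psi(H)$ linear in $1/\epsilon$ and logarithmic in $M$ can capture.

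Your fallback does not close the gap. The trace bound on $B$ gives only $\lambda_{\min}(H_{k+1})\ge 1/((n+m)M)$. The hard direction is an upper bound on $\lambda_{\max}(H_{k+1})=1/\lambda_{\min}(B)$, and that needs a lower bound on $\det B$.

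The repair is to run the potential on $B$, where Byrd and Nocedal's inequality actually holds. Combine three facts: $\mathrm{tr}\,B^{(j)}=\mathrm{tr}\,B^{(j-1)}-\frac{\|B^{(j-1)}s_j\|^2}{s_j^TB^{(j-1)}s_j}+\frac{\|y_j\|^2}{y_j^Ts_j}$, the determinant identity $\det B^{(j)}=\det B^{(j-1)}\frac{y_j^Ts_j}{s_j^TB^{(j-1)}s_j}$, and Cauchy--Schwarz in the form $\frac{\|Bs\|^2}{s^TBs}\ge q:=\frac{s^TBs}{\|s\|^2}$ together with $q-\ln q\ge 1$. These give
\[
\psi(B^{(j)})\le\psi(B^{(j-1)})+\frac{\|y_j\|^2}{y_j^Ts_j}-1-\ln\frac{y_j^Ts_j}{\|s_j\|^2}\le\psi(B^{(j-1)})+M-1-\ln\epsilon .
\]
Use $\psi(B^{(0)})=n/\gamma+n\ln\gamma$ and $\kappa(B)=\kappa(H)$. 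Your root argument then goes through verbatim for the eigenvalues of $B$. It yields a bound of the stated transcendental form, with $C^*$ replaced by $\max\{nM-n\ln M,\;n\epsilon-n\ln\epsilon\}+m(M-1-\ln\epsilon)$.

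Alternatively, follow Liu and Nocedal. The bound $\det B^{(j)}\ge\det B^{(j-1)}\,\epsilon/((n+m)M)$, together with your trace bound and $\det B^{(0)}=\gamma^{-n}$, bounds $\lambda_{\min}(B)\ge\det B/\lambda_{\max}(B)^{n-1}$ below by a constant depending only on $n,m,\epsilon,M$.

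Either repair gives a uniform bound on $\kappa(H_{k+1})$. Neither establishes the particular constant $C^*$. The intermediate claim that every eigenvalue of $H_{k+1}$ lies in $[m_1,M_1]$ is false, as the example above shows.
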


\begin{proof}
As in L-BFGS with standard OS scaling, at any iteration $k$, Two-Sided L-BFGS constructs $H_{k+1}$ via at most $m$ successive BFGS updates to the scaled identity matrix, i.e., $H_{k+1}^{(0)} = \gamma_{k+1}I$. By construction, Two-Sided L-BFGS only saves vector pairs $(s_k, y_k)$ at iteration $k$ to the memory buffer iff $\frac{y_k^T s_k}{\|s_k\|^2} \geq \epsilon$ for $\epsilon > 0$ and $\frac{\|y_k\|^2}{y_k^T s_k}$ for $M \gg 0$. Let the memory buffer of length $m$ be indexed by $j = 1,2, ..., \hat{m}$, where $\hat{m} \leq m$. Then the sequence of matrix updates in Two-Sided L-BFGS is

\begin{equation}
    \begin{cases}
        H_{k+1}^{(0)} = \gamma_{k+1}I \quad \text{for }j=0, \\
        H_{k+1}^{j} = (I - \rho_k s_jy_j^T)H_{k+1}^{(j-1)}(I - \rho_j y_j s_j^T) + \rho_j s_j s_j^T, \quad \text{for } j \in [1, \hat{m}].
    \end{cases}
\end{equation}

\noindent
The final matrix is $H_{k+1} = H_{k+1}^{(\hat{m})}$. By the Trace-Determinant lemma, we have

\begin{equation}
\psi(H_{k+1}^{(j)}) \leq \psi(H_{k+1}^{(j-1)}) + \frac{\|s_j\|^2}{y_j^T s_j} - 1 - \text{ln}(\frac{y_j^T s_j}{s_j^T B_{k+1}^{(j-1)}s_j}),
\end{equation}

\noindent
where $B_{k+1}^{(j-1)} = [H_{k+1}^{(j-1)}]^{-1}$. Summing inequality (20) telescopically for $j$ up to $\hat{m}$ yields

\begin{equation}
\psi(H_{k+1}) \leq \psi(\gamma_{k+1}I) + \sum_{j=1}^{\hat{m}}\frac{\|s_j\|^2}{y_j^T s_j} - \hat{m} - \sum_{j=1}^{\hat{m}}\text{ln}(\frac{y_j^T s_j}{s_j^T B_{k+1}^{(j-1)}s_j}).
\end{equation}

\noindent
Thus, 

\begin{equation}
\psi(H_{k+1}) \leq \psi(\gamma_{k+1}I) + \sum_{j=1}^{\hat{m}}\frac{\|s_j\|^2}{y_j^T s_j} - \sum_{j=1}^{\hat{m}}\text{ln}(\frac{y_j^T s_j}{s_j^T B_{k+1}^{(j-1)}s_j}),
\end{equation}

\noindent
or equivalently,

\begin{equation}
\psi(H_{k+1}) \leq \psi(\gamma_{k+1}I) + \sum_{j=1}^{\hat{m}}\frac{\|s_j\|^2}{y_j^T s_j} + \sum_{j=1}^{\hat{m}}\text{ln}(\frac{s_j^T B_{k+1}^{(j-1)}s_j}{y_j^T s_j}).
\end{equation}

\noindent
To upper bound $\psi(H_{k+1})$, it suffices to upper bound each of the three terms on the right-hand side of (23), respectively. \\
\\
We first find an upper bound on $\psi(\gamma_{k+1}I)$. Recall $\gamma_{k+1} = \frac{s_k^Ty_k}{\|y_k\|^2}$. By the ``upper side" of the envelope condition, $\frac{||y_k||^2}{y_k^Ts_k} \leq M$. Then $\gamma_{k+1} \geq \frac{1}{M}$. Likewise, the Li-Fukushima cautious update rule acts as the ``lower side" of the envelope condition, where $\frac{y_k^T s_k}{\|s_k\|^2} \geq \epsilon$. By the Cauchy-Schwarz inequality, $y_k^T s_k \leq \|y_k\| \|s_k\|$, which in turn implies $\|s_k\| \geq \frac{y_k^T s_k}{\|y_k\|}$. Substituting this inequality into the Li-Fukushima bound yields $\epsilon \leq \frac{y_k^T s_k}{\|s_k\|^2} \leq \frac{(y_k^T s_k)}{(\frac{y_k^T s_k}{\|y_k\|})^2} = \frac{\|y_k\|^2}{y_k^T s_k}$. Note that the Li-Fukushima bound also implies $\|s_k\| \leq \frac{\|y_k\|}{\epsilon}$. Then we have $\gamma_{k+1} = \frac{s_k y_k^T}{\|y_k\|^2} \leq \frac{\|s_k\| \|y_k\|}{\|y_k\|^2} = \frac{\|s_k\|}{\|y_k\|} \leq \frac{(\frac{\|y_k\|}{\epsilon})}{\|y_k\|} = \frac{1}{\epsilon}$, so $\gamma_{k+1} \leq \frac{1}{\epsilon}$. Therefore, the OS factor itself in Two-Sided L-BFGS is constrained such that $\gamma_{k+1} \in [\frac{1}{M}, \frac{1}{\epsilon}]$, so $\frac{1}{M} \leq \gamma_{k+1} \leq \frac{1}{\epsilon}$. By the Trace-Determinant lemma, $\psi(\gamma_{k+1}I) = \text{tr}(\gamma_{k+1}I) - \text{ln}[\text{det}(\gamma_{k+1})I] = n\gamma_{k+1} - n\text{ln}(\gamma_{k+1})$. Since the function $h(\gamma) = n\gamma - n\text{ln}(\gamma)$ is convex and continuous over $[\frac{1}{M}, \frac{1}{\epsilon}]$, $h(\gamma)$ is bounded above at the endpoints of $[\frac{1}{M}, \frac{1}{\epsilon}]$. Therefore,

\begin{equation}
\psi(\gamma_{k+1}I) \leq \text{max}\{\frac{n}{M}-n\text{ln}(\frac{1}{M}), \frac{n}{\epsilon}-n\text{ln}(\frac{1}{\epsilon})\} < \infty.
\end{equation}

\noindent
We next upper bound $\sum_{j=1}^{\hat{m}} \frac{\|s_j\|^2}{y_j^T s_j}$. By the Li-Fukushima bound, $\frac{\|s_j\|^2}{y_j^T s_j} \leq \frac{1}{\epsilon}$, so we have

\begin{equation}
\sum_{j=1}^{\hat{m}} \frac{\|s_j\|^2}{y_j^T s_j} \leq \frac{\hat{m}}{\epsilon} \leq \frac{m}{\epsilon} < \infty,
\end{equation}

\noindent
since $\hat{m} \leq m$. Lastly, we upper bound $\sum_{j=1}^{\hat{m}}\text{ln}(\frac{s_j^T B_{k+1}^{(j-1)}s_j}{y_j^T s_j})$. Consider $\frac{s_j^T B_{k+1}^{(j-1)}s_j}{y_j^T s_j}$. Note $\frac{s_j^T B_{k+1}^{(j-1)}s_j}{y_j^T s_j} = (\frac{s_j^T B_{k+1}^{(j-1)}s_j}{\|s_j\|^2})(\frac{\|s_j\|^2}{y_j^T s_j})$, so we can upper bound each of $\frac{s_j^T B_{k+1}^{(j-1)}s_j}{\|s_j\|^2}$ and $\frac{\|s_j\|^2}{y_j^T s_j}$, respectively. We already showed that $\frac{\|s_j\|^2}{y_j^T s_j} \leq \frac{1}{\epsilon}$ immediately follows from the Li-Fukushima bound. Since $B_{k+1}^{(j-1)}$ is symmetric, it follows from its Rayleigh quotient bound that $\frac{s_j^T B_{k+1}^{(j-1)}s_j}{\|s_j\|^2} \leq \lambda_{\text{max}}(B_{k+1}^{(j-1)})$. Now we want to upper bound $\lambda_{\text{max}}(B_{k+1}^{(j-1)})$. The update rule for $B_{k+1}^{(j-1)}$ is

\[
B^{(j)} = B^{(j-1)} - \frac{B^{(j-1)}s_j s_j^T B^{(j-1)}}{s_j^T B^{(j-1)}s_j} + \frac{y_j y_j^T}{y_j^T s_j}.
\]

\noindent
Taking the trace (Tr) of both sides yields

\[
\text{Tr}(B^{(j)}) = \text{Tr}(B^{(j-1)}) - \frac{\|B^{(j-1)}s_j\|^2}{s_j^T B^{(j-1)}s_j} + \frac{\|y_j\|^2}{y_j^T s_j}.
\]

\noindent
Since $- \frac{\|B^{(j-1)}s_j\|^2}{s_j^T B^{(j-1)}s_j} < 0$, this also gives

\[
\text{Tr}(B^{(j)}) \leq \text{Tr}(B^{(j-1)}) + \frac{\|y_j\|^2}{y_j^T s_j}.
\]

\noindent
But by the ``upper side" of the geometric envelope of Two-Sided L-BFGS, $\frac{\|y_j\|^2}{y_j^T s_j} \leq M$, so we then have

\[
\text{Tr}(B^{(j)}) \leq \text{Tr}(B^{(j-1)}) + M.
\]

\noindent
This inequality represents a telescoping sum from $j = 1$ to $\hat{m}$. For initialization $B^{(0)} = \frac{1}{\gamma_{k+1}}I$, this telescoping sum evaluates to

\[
\text{Tr}(B^{(\hat{m})}) \leq \text{Tr}(\frac{1}{\gamma_{k+1}}I) + \hat{m}M.
\]

\noindent
But recall we showed that $\gamma_{k+1} \geq \frac{1}{M}$, which implies $\frac{1}{\gamma_{k+1}} \leq M$, so $\text{Tr}(\frac{1}{\gamma_{k+1}}I) \leq nM$. Moreover, since $\hat{m} \leq m$, this gives us

\[
\text{Tr}(B^{(\hat{m})}) \leq nM + mM = (n+m)M.
\]

\noindent
Since $B^{(\hat{m})}$ is symmetric positive definite with $j \leq \hat{m}$, $\lambda_{\text{max}}(B_{k+1}^{(j-1)}) \leq \text{Tr}(B^{(\hat{m})})$. Thus,

\[
\lambda_{\text{max}}(B_{k+1}^{(j-1)}) \leq (n+m)M,
\]

\noindent
so $\frac{s_j^T B_{k+1}^{(j-1)}s_j}{\|s_j\|^2} \leq (n+m)M$. Then we yield

\[
\sum_{j=1}^{\hat{m}} \text{ln}(\frac{s_j^T B_{k+1}^{(j-1)}s_j}{y_j^T s_j}) \leq \sum_{j=1}^{\hat{m}} \text{ln}((n+m)M)(\frac{1}{\epsilon}) = \sum_{j=1}^{\hat{m}}\text{ln}(\frac{(n+m)M}{\epsilon}) = \hat{m}\text{ln}(\frac{(n+m)M}{\epsilon}) \leq m \text{ln}(\frac{(n+m)M}{\epsilon}).
\]

\noindent
Thus, we yield the upper bound

\begin{equation}
\sum_{j=1}^{\hat{m}}\text{ln}(\frac{s_j^T B_{k+1}^{(j-1)}s_j}{y_j^T s_j}) \leq m\text{ln}(\frac{(n+m)M}{\epsilon}) < \infty.
\end{equation}

\noindent
Then summing inequalities (24), (25), and (26) gives

\begin{equation}
\psi(H_{k+1}) \leq \text{max}\{\frac{n}{M}-n\text{ln}(\frac{1}{M}), \frac{n}{\epsilon}-n\text{ln}(\frac{1}{\epsilon})\} + \frac{m}{\epsilon} + m\text{ln}(\frac{(n+m)M}{\epsilon}) < \infty.
\end{equation}

\noindent
Let $C^{\star}(n,m, \epsilon, M) := \text{max}\{\frac{n}{M}-n\text{ln}(\frac{1}{M}), \frac{n}{\epsilon}-n\text{ln}(\frac{1}{\epsilon})\} + \frac{m}{\epsilon} + m\text{ln}(\frac{(n+m)M}{\epsilon})$, so we likewise have

\begin{equation}
\psi(H_{k+1}) \leq C^{\star}(n,m, \epsilon, M) < \infty.
\end{equation}

\noindent
Byrd and Nocedal's potential function for $H_{k+1}$ can likewise be written as

\begin{equation}
\psi(H_{k+1}) = \sum_{i=1}^{n}(\lambda_i - \text{ln}\lambda_i)
\end{equation}

\noindent
for eigenvalues $\lambda_i$ of $H_{k+1}$. For any arbitrary eigenvalue $\lambda_k$ of $H_{k+1}$, we have

\begin{equation}
\psi(H_{k+1}) = (\lambda_k - \text{ln}\lambda_k) + \sum_{i \neq k}^{n}(\lambda_i - \text{ln}\lambda_i).
\end{equation}

\noindent
$h(\lambda) = \lambda - \text{ln}\lambda$ has a global minimum at $\lambda = 1$, and $h(1) = 1$. Thus, $\sum_{i \neq k}^{n}(\lambda_i - \text{ln}\lambda_i) \geq (n-1)(1) = n-1$. This inequality and Eqn. (28) together imply

\begin{equation}
\lambda_k - \text{ln}\lambda_k \leq \psi(H_{k+1}) - n + 1.
\end{equation}

\noindent
Since $\psi(H_{k+1}) \leq C^\star$, we equivalently have

\begin{equation}
\lambda_k - \text{ln}\lambda_k \leq C^\star - n + 1.
\end{equation}

\noindent
Because the function $h(\lambda) = \lambda - \ln\lambda$ satisfies $\lim_{\lambda \to 0^+} h(\lambda) = \infty$ and $\lim_{\lambda \to \infty} h(\lambda) = \infty$, the transcendental equation $\lambda - \ln\lambda = C^* - n + 1$ possesses exactly two positive roots, $m_1$ and $M_1$ where $m_1 \leq M_1$. Therefore, for all eigenvalues $\lambda_i$ of $H_{k+1}$, $\lambda_i(H_{k+1}) \in [m_1, M_1]$. Thus,

\begin{equation}
0 < m_1 \leq \lambda_i(H_{k+1}) \leq M_1 < \infty, \quad \forall i \in [1,n].
\end{equation}

\noindent
Hence, $\kappa(H_{k+1})$ is uniformly bounded by

\begin{equation}
\kappa(H_{k+1}) = \frac{\lambda_{\text{max}}(H_{k+1})}{\lambda_{\text{min}}(H_{k+1})} \leq \frac{M_1}{m_1} = \kappa_{\text{max}}(H_{k+1}) < \infty.
\end{equation}
\end{proof}

\noindent
\textbf{Remark} Theorem 6 proves that Two-Sided L-BFGS ensures $\kappa(H_{k+1}) \not\to \infty$. Moreover, recall $C^{\star}(n,m, \epsilon, M) := \text{max}\{\frac{n}{M}-n\text{ln}(\frac{1}{M}), \frac{n}{\epsilon}-n\text{ln}(\frac{1}{\epsilon})\} + \frac{m}{\epsilon} + m\text{ln}(\frac{(n+m)M}{\epsilon})$. One can verify that $C^\star$ is $O(\frac{n}{\epsilon} + m\text{ln}n)$ given the conditions $n \gg M \gg m$ and small $\epsilon > 0$. For sufficiently large $n$, $C^\star$ is $O(\frac{n} {\epsilon})$. Theorem 6 ensures that for any fixed dimension $n$, the condition number is uniformly bounded above by a finite constant ($\kappa(H_{k+1}) \leq \kappa_{\text{max}} < \infty$), guaranteeing that the matrix never becomes strictly singular or infinitely ill-conditioned during the execution of Two-Sided L-BFGS. However, because the trace-determinant potential function bounds the spectrum via a transcendental equation, this theoretical worst-case bound $\kappa_{\text{max}}$ degrades exponentially with $n$, scaling $O(n e^n)$.

\section{Convergence Analysis of Two-Sided L-BFGS}

\cite{LF01-SIAM} showed that L-BFGS with the Li-Fukushima cautious update rule has asymptotic global convergence in the non-convex regime. While Two-Sided L-BFGS, as we showed in $\S$3, has improved numerical stability over that of standard L-BFGS with OS scaling (since Two-Sided L-BFGS ensures $\kappa(H_{k+1}) \not\rightarrow \infty$), it begs the question as to whether Two-Sided L-BFGS preserves this non-convex asymptotic global convergence. Here, we show that this is indeed the case. To do so, we require $f: \mathbb{R}^n \to \mathbb{R}$ to satisfy the existence of a bounded level set, i.e.,

\begin{definition}[Bounded Level Set]
Let $f: \mathbb{R}^n \to \mathbb{R}$. Then $\exists \Omega \in \text{dom}(f)$, called a \textbf{level set}, where

\[
\Omega = \{x \in \mathbb{R}^n \mid f(x) \leq f(x_0)\}
\]

\noindent
and $\Omega$ is bounded.
\end{definition}

\noindent
This ensures the sequence $\{x_k\}$ stays within a compact region, meaning $f$ is bounded below, i.e.,  $f(x) \geq f^* > -\infty$. We also require $f$ to have $\beta$-Lipschitz gradients, i.e.,

\begin{definition}[$\beta$-Lipschitz Gradient]
Let $f: \mathbb{R}^n \to \mathbb{R}$. If $f$ has \textbf{$\beta$-Lipschitz gradients}, then $\exists \beta > 0$ such that, $\forall x,y \in \Omega$,

\[
\|\nabla f(x) - \nabla f(y)\| \leq \beta \|x - y\|.
\]
\end{definition}

\noindent
If these two conditions hold, then, as we show, Two-Sided L-BFGS preserves non-convex asymptotic global convergence.

\begin{theorem}[Asymptotic Global Convergence of Two-Sided L-BFGS]
Let $f: \mathbb{R}^n \to \mathbb{R}$ be continuously differentiable on $\mathbb{R}^n$. Suppose the Two-Sided L-BFGS algorithm (cf. Algorithm 3) generates an infinite sequence of iterates $\{x_k\}_{k=0}^{\infty}$ using a step length $\alpha_k$ that satisfies the strong Wolfe conditions:

\[
f(x_k + \alpha_k p_k) \leq f(x_k) + c_1 \alpha_k \nabla f(x_k)^T p_k,
\]

\[
|\nabla f(x_k + \alpha_k p_k)^T p_k| \leq c_2 |\nabla f(x_k)^T p_k|,
\]

\noindent
where $0 < c_1 < c_2 < 1$. Let the historical updates $(s_k, y_k)$ be conditionally accepted into the memory buffer of depth $m$ under the $[\epsilon, M]$ envelope

\[
\frac{y_k^T s_k}{\|s_k\|^2} \geq \epsilon \quad \text{and} \quad \frac{\|y_k\|^2}{y_k^T s_k} \leq M,
\]

\noindent
for constants $\epsilon > 0$ and $M \gg 0$. If the following standard conditions hold,
\begin{enumerate}
\item The initial level set $\Omega = \{x \in \mathbb{R}^n \mid f(x) \leq f(x_0)\}$ is bounded,
\item The gradient $\nabla f$ is $\beta$-Lipschitz continuous on $\Omega$,
\end{enumerate}

\noindent
then Two-Sided L-BFGS converges globally to a first-order stationary point, i.e., 

\[
\lim_{k \to \infty} \|\nabla f(x_k)\| = 0.
\]
\end{theorem}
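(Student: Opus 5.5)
The plan is the classical Zoutendijk argument. A uniform bound on the spectrum of every $H_k$ gives a uniform lower bound on the angle between $p_k = -H_k g_k$ and $-g_k$, where $g_k = \nabla f(x_k)$. Combined with the Zoutendijk series, this yields $\sum_k \|g_k\|^2 < \infty$. That gives the full limit $\|g_k\| \to 0$, not only the $\liminf$ obtained by \cite{LF01-SIAM}.

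First I would establish that the iteration is well defined and stays in $\Omega$. Every matrix in the buffer recursion is symmetric positive definite, because each accepted pair has $y_j^T s_j \geq \epsilon \|s_j\|^2 > 0$ and the initial matrix is $\gamma I$ with $\gamma > 0$. Hence $g_k^T p_k = -g_k^T H_k g_k < 0$ whenever $g_k \neq 0$. Since $\Omega$ is bounded and $f$ is continuous, $f$ is bounded below along the ray, so a strong Wolfe step exists. The Armijo inequality gives $f(x_{k+1}) \leq f(x_k)$, so $x_k \in \Omega$ for all $k$.

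Second, I would derive the Zoutendijk condition. Both $x_k$ and $x_{k+1}$ lie in $\Omega$, so the Lipschitz hypothesis applies to them directly. From the curvature part of the strong Wolfe conditions,
\[
(c_2 - 1)\, g_k^T p_k \leq (g_{k+1} - g_k)^T p_k \leq \beta \alpha_k \|p_k\|^2 .
\]
This gives $\alpha_k \geq \frac{(1-c_2)|g_k^T p_k|}{\beta \|p_k\|^2}$. Inserting this into the Armijo inequality yields
\[
f(x_k) - f(x_{k+1}) \geq \frac{c_1(1-c_2)}{\beta} \cos^2\theta_k \, \|g_k\|^2, \qquad \cos\theta_k := \frac{-g_k^T p_k}{\|g_k\|\,\|p_k\|} .
\]
Summing over $k$ and using $f \geq f^* > -\infty$ on the compact set $\Omega$ gives $\sum_k \cos^2\theta_k \|g_k\|^2 < \infty$.

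Third, I would bound $\cos\theta_k$ from below. For symmetric positive definite $H_k$ with eigenvalues in $[m_1, M_1]$,
\[
\cos\theta_k = \frac{g_k^T H_k g_k}{\|g_k\|\,\|H_k g_k\|} \geq \frac{m_1 \|g_k\|^2}{M_1 \|g_k\|^2} = \frac{1}{\kappa_{\max}} .
\]
Hence $\sum_k \|g_k\|^2 \leq \kappa_{\max}^2 \sum_k \cos^2\theta_k \|g_k\|^2 < \infty$, which forces $\|g_k\| \to 0$.

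The main obstacle is making sure the eigenvalue bounds of Theorem 6 apply to \emph{every} $H_k$, not just to those built right after an accepted pair. There are three cases to cover.
\begin{itemize}
\item \emph{The initial matrix and the pre-acceptance phase.} $H_0 = \gamma_0 I = I$, and $\gamma_0 = 1$ need not lie in $[\frac{1}{M}, \frac{1}{\epsilon}]$. Until the first pair is accepted, $H_k = I$, which is trivially well conditioned.
\item \emph{Skipped iterations.} The carried-forward $\gamma_k$ either equals $1$ or is a previously accepted OS factor, so it lies in $[\frac{1}{M}, \frac{1}{\epsilon}] \cup \{1\}$.
\item \emph{Partially filled buffers.} These have $\hat m < m$ pairs, which Theorem 6 already covers.
\end{itemize}
I would therefore redo the bound on $\psi(\gamma I)$ in Theorem 6 over the enlarged range $\gamma \in [\min\{1, \frac{1}{M}\}, \max\{1, \frac{1}{\epsilon}\}]$. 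The remaining two terms of $C^\star$ are unchanged. This yields a possibly larger but still finite constant $C^\star$, and hence uniform $0 < m_1 \leq \lambda_i(H_k) \leq M_1$ for all $k$. The trace bound on $B^{(j)}$ needs the same adjustment, since $1/\gamma \leq \max\{1, M\}$.
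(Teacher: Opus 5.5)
Your proposal is correct and is essentially the paper's own proof: Zoutendijk's condition from the strong Wolfe and Lipschitz hypotheses, then the uniform spectral bounds of Theorem 6 giving $\cos\theta_k \geq 1/\kappa_{\max}$ and hence $\sum_k \|\nabla f(x_k)\|^2 < \infty$; your enlarged $\gamma$-range is harmless but unnecessary, since $\gamma_k$ fails to be an accepted Oren--Spedicato factor in $[1/M, 1/\epsilon]$ only before the first acceptance, when the buffer is empty and $H_k = I$. If you do want to re-derive the spectral bounds, do not re-run Theorem 6's potential-function argument, whose inequality for $H$ fails as stated (try $n = 1$, $H = 1$, $s = 0.01$, $y = 1$); instead use the classical bounds $\text{tr}(B) \leq (n+m)M$ and $\det B_{+} \geq \det B \cdot \epsilon/\lambda_{\max}(B)$ on the direct approximations, which give the finite $\kappa_{\max}$ this theorem actually needs.
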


\begin{proof}
Since Two-Sided L-BFGS finds step length $\alpha_k$ satisfying the strong Wolfe conditions at any iteration $k$, the Armijo condition holds, so we have

\begin{equation}
f(x_{k+1}) \leq f(x_k) + c_1\alpha_k \nabla f(x_k)^Tp_k,
\end{equation}

\noindent
for $c_1 \in (0,1)$. Since $B_k \succ 0$ by construction, search directions produced by Two-Sided L-BFGS satisfy

\[
p_k = -H_k \nabla f(x_k).
\]

\noindent
Moreover, since $H_k = H_k^T \succ 0$ by Theorem 6, $p_k$ is a descent direction so $\nabla f(x_k)^T p_k = -\nabla f(x_k)^T H_k \nabla f(x_k) < 0$. Let $\theta_k$ define the angle between $p_k$ and the steepest descent direction $-\nabla f(x_k)$. This implies

\begin{equation}
\text{cos}\theta_k = \frac{-\nabla f(x_k)^T p_k}{\|\nabla f(x_k)\| \|p_k\|}.
\end{equation}

\noindent
This means that the Armijo condition can likewise be expressed as

\[
f(x_{k+1}) \leq f(x_k) - c_1 \alpha_k \text{cos}\theta_k \|\nabla f(x_k)\| \|p_k\|.
\]

\noindent
We next lower bound $\alpha_k$. By the second Wolfe condition, we have

\[
\nabla f(x_{k+1})^T p_k \geq c_2 \nabla f(x_k)^T p_k,
\]

\noindent
for $c_2 \in (c_1, 1)$. By algebraic manipulation,

\[
\nabla f(x_{k+1})^T p_k - \nabla f(x_k)^T p_k \geq c_2 \nabla f(x_k)^Tp_k - \nabla f(x_k)^T p_k,
\]

\[
(\nabla f(x_{k+1})-\nabla f(x_k))^T p_k \geq (c_2 - 1)\nabla f(x_k)^Tp_k.
\]

\noindent
By the Cauchy-Schwarz inequality, $(\nabla f(x_{k+1})-\nabla f(x_k))^T p_k \leq \|\nabla f(x_{k+1})-\nabla f(x_k)\| \|p_k\|$. Moreover, since $f$ has $\beta$-Lipschitz gradients, $\|\nabla f(x_{k+1})-\nabla f(x_k)\| \leq \beta \|x_{k+1} - x_k \|$. Thus, we can infer

\[
\beta \|x_{k+1} - x_k\| \geq (c_2 - 1)\nabla f(x_k)^T p_k,
\]

\noindent
or equivalently,

\[
\beta \|x_{k+1} - x_k\| \geq (1-c_2)(-\nabla f(x_k)^T p_k).
\]

\noindent
Two-Sided L-BFGS performs parameter updates via the standard L-BFGS $x_{k+1} = x_k - \alpha_k p_k$ update rule. One can verify by computation that $\beta \|x_{k+1} - x_k\| = \beta \alpha_k \|p_k\|$. Then we have

\[
\beta \alpha_k \|p_k\|^2 \geq (1-c_2)(-\nabla f(x_k)^T p_k).
\]

\noindent
By computation, we lower bound $\alpha_k$,

\[
\alpha_k \geq \frac{(1-c_2)(-\nabla f(x_k)^T p_k)}{\beta \|p_k\|^2},
\]

\[
\alpha_k \geq (\frac{1-c_2}{\beta})(\frac{-\nabla f(x_k)^T p_k}{\|p_k \|^2}),
\]

\[
\alpha_k \geq (\frac{1-c_2}{\beta})(\frac{\|\nabla f(x_k)\| \|p_k\| \text{cos}\theta_k}{\|p_k\|^2}),
\]

\noindent
so we have the following lower bound on $\alpha_k$:

\begin{equation}
\alpha_k \geq (\frac{1-c_2}{\beta})(\frac{\| \nabla f(x_k)\| \text{cos}\theta_k}{\|p_k\|}).
\end{equation}

\noindent
Then substituting the lower bound on $\alpha_k$ in (37) into the Armijo condition implies 

\[
f(x_{k+1}) \leq f(x_k) - c_1(\frac{1-c_2}{\beta})(\frac{\|\nabla f(x_k)\| \text{cos}\theta_k}{\|p_k\|})\text{cos}\theta_k \|\nabla f(x_k\| \|p_k\|,
\]

\[
f(x_{k+1}) \leq f(x_k) - [\frac{c_1(1-c_2)}{\beta}]\| \nabla f(x_k)\|^2 \text{cos}^2\theta_k.
\]

\noindent
We show that $\sum_{k=0}^{\infty} \|\nabla f(x_k)\|^2 \text{cos}^2\theta_k < \infty$. By computation,

\[
f(x_{k+1}) - f(x_k) \leq -[\frac{c_1(1-c_2)}{\beta}]\| \nabla f(x_k) \| \text{cos}^2\theta_k,
\]

\[
\|\nabla f(x_k)\|^2 \text{cos}^2\theta_k \leq - [\frac{\beta}{c_1(1-c_2)}][f(x_{k+1}) - f(x_k)],
\]

\[
\|\nabla f(x_k)\|^2 \text{cos}^2\theta_k \leq [\frac{\beta}{c_1(1-c_2)}][f(x_k) - f(x_{k+1})],
\]

\[
[\frac{c_1(1-c_2)}{\beta}]\|\nabla f(x_k)\|^2 \text{cos}^2\theta_k \leq f(x_k) - f(x_{k+1}).
\]

\noindent
Suppose we run Two-Sided L-BFGS for $T$ iterations. Then from $k=0$ to $k=T$ we have

\[
[\frac{c_1(1-c_2)}{\beta}]\sum_{k=0}^{T}\|\nabla f(x_k)\|^2 \text{cos}^2\theta_k \leq \sum_{k=0}^{T}(f(x_k) - f(x_{k+1})).
\]

\noindent
$\sum_{k=0}^{T}(f(x_k) - f(x_{k+1}))$ is a telescoping series, which implies $\sum_{k=0}^{T}(f(x_k) - f(x_{k+1})) = f(x_0) - f(x_{T+1})$, so we have

\[
[\frac{c_1(1-c_2)}{\beta}]\sum_{k=0}^{T}\|\nabla f(x_k)\|^2 \text{cos}^2\theta_k \leq f(x_0) - f(x_{T+1}).
\]

\noindent
Thus, 

\begin{equation}
\sum_{k=0}^{T}\|\nabla f(x_k)\|^2 \text{cos}^2\theta_k \leq [\frac{\beta}{c_1(1-c_2)}](f(x_0) - f(x_{T+1})).
\end{equation}

\noindent
Since $\Omega$ is a bounded level set, $f$ is bounded below by $f^*$. Taking the limit as $T \to \infty$, the right side of (38) remains bounded, so we obtain

\begin{equation}
\sum_{k=0}^{\infty} \|\nabla f(x_k)\|^2 \cos^2\theta_k < \infty. 
\end{equation}

\noindent
Inequality (39) is known as the \textit{Zoutendijk condition} (cf. \cite{Zou60}), so we have shown that Two-Sided L-BFGS preserves the Zoutendijk condition. To show that the gradient norm series must vanish, we require that the search directions do not become asymptotically orthogonal to the gradient vector (i.e., $\cos^2\theta_k$ must be uniformly bounded away from zero). Since $H_k = H_k^T \succ 0$ and $\kappa_{\max}(H_{k+1}) = \frac{M_1}{m_1} < \infty$ by Theorem 6, the eigenvalues of $H_k$ are strictly bounded within the positive compact interval $[m_1, M_1]$. This implies

\begin{equation}
\|H_k \nabla f(x_k)\| \leq M_1 \|\nabla f(x_k)\| \quad \text{and} \quad \nabla f(x_k)^T H_k \nabla f(x_k) \geq m_1 \|\nabla f(x_k)\|^2.
\end{equation}

\noindent
Since $p_k = -H_k \nabla f(x_k)$ and $\text{cos}\theta_k = \frac{-\nabla f(x_k)^T p_k}{\|\nabla f(x_k)\| \|p_k\|}$, we have

\begin{equation}
\text{cos}\theta_k = \frac{\nabla f(x_k)^T H_k \nabla f(x_k)}{\|\nabla f(x_k)\| \|p_k\|}.
\end{equation}

\noindent
Substituting the bounds in (40) into (41) implies

\begin{equation}
\cos\theta_k = \frac{\nabla f(x_k)^T H_k \nabla f(x_k)}{\|\nabla f(x_k)\| \|H_k \nabla f(x_k)\|} \geq \frac{m_1 \|\nabla f(x_k)\|^2}{\|\nabla f(x_k)\| \cdot M_1 \|\nabla f(x_k)\|} = \frac{m_1}{M_1} = \frac{1}{\kappa_{\max}} > 0,
\end{equation}

\noindent
which in turn implies

\begin{equation}
\text{cos}^2\theta_k \geq \frac{1}{\kappa^2_{\text{max}}} > 0, \quad \forall k \geq 0.
\end{equation}

\noindent
(43) shows that search directions generated by Two-Sided L-BFGS are uniformly bounded away from orthogonality to the gradient vector. Substituting this uniform lower bound into the Zoutendijk condition (inequality (39)) yields

\begin{equation}
\sum_{k=0}^{\infty} \|\nabla f(x_k)\|^2 \left(\frac{1}{\kappa_{\max}^2}\right) \leq \sum_{k=0}^{\infty} \|\nabla f(x_k)\|^2 \cos^2\theta_k < \infty,
\end{equation}

\noindent
so we likewise have

\begin{equation}
\frac{1}{\kappa_{\max}^2} \sum_{k=0}^{\infty} \|\nabla f(x_k)\|^2 \leq \sum_{k=0}^{\infty} \|\nabla f(x_k)\|^2 \cos^2\theta_k < \infty.
\end{equation}

\noindent
Thus,

\begin{equation}
\lim_{k \to \infty} \|\nabla f(x_k)\|^2 = 0 \implies \lim_{k \to \infty} \|\nabla f(x_k)\| = 0.
\end{equation}

\noindent
Therefore, the sequence of iterates generated by the Two-Sided L-BFGS algorithm converges globally to a first-order stationary point in the non-convex regime.
\end{proof}

\section{Numerical Experiments}

The theoretical analysis in $\S$3 and $\S$4 establishes that introducing a two-sided $[\epsilon, M]$ geometric envelope safeguards the inverse Hessian approximation matrix from becoming infinitely ill-conditioned ($\kappa(H_{k+1}) \leq \kappa_{\max} < \infty$) while fully preserving non-convex asymptotic global convergence. The objective of these experiments is to demonstrate that this mathematical safeguarding translates directly into improved numerical stability, robust line-search performance, and accelerated wall-clock convergence on highly non-convex, ill-conditioned optimization landscapes. To do so, we present a set of three different numerical experiments:
\begin{enumerate}
    \item \textbf{Controlled Stress-Testing on Classical Non-Convex Functions} We evaluate Two-Sided L-BFGS on high-dimensional variants of the Rosenbrock function to explicitly track the trajectory of the condition number $\kappa(H_k)$ and observe how the geometric envelope mitigates the characteristic stagnation or "flattening" of the optimality gap, $|| \nabla f(x_k) ||$, common to standard L-BFGS under extreme ill-conditioning.
    \item \textbf{Standard Unconstrained Optimization Benchmarks} Utilizing challenging instances from the CUTEst benchmark suite (such as the notoriously ill-conditioned \texttt{DIXMAANA} problem), we measure search-direction degradation via an orthogonality metric, $\cos\theta_k$. We further analyze the efficiency of the line-search mechanism by tracking the distribution of function evaluations per iteration to determine if the safeguarded matrix spectrum eliminates costly backtracking loops.
    \item \textbf{Large-Scale Deep Learning Applications} We deploy the algorithm to train deep autoencoders on MNIST. Because deep autoencoder loss landscapes are inherently non-convex and prone to severe numerical instability, this serves as a benchmark for evaluating wall-clock efficiency, confirming that the $O(n)$ safeguarding overhead is strictly dominated by the $O(mn)$ two-loop recursion.
\end{enumerate}

\noindent
All algorithms are implemented in Python with standard libraries (e.g., NumPy, PyTorch) and executed on a Macbook with a 1.4 GHz Quad-Core Intel Core i5 processor to ensure a fair and consistent baseline for wall-clock comparisons. In all benchmarks, standard L-BFGS with Oren-Spedicato (OS) scaling serves as our primary baseline.

\subsection{Rosenbrock Function}

We simulate the optimization of a high-dimensional Rosenbrock function ($n = 100$) using standard L-BFGS with Oren-Spedicato (OS) scaling versus our proposed Two-Sided L-BFGS. The high-dimensional Rosenbrock function is notoriously ill-conditioned and non-convex, defined as

\begin{equation}
f(x) = \sum_{i=1}^{n-1} \left[ 100(x_{i+1} - x_i^2)^2 + (1 - x_i)^2 \right].
\end{equation}

\noindent
The experimental setup is as follows: set dimension $n=100$ and memory depth $m=10$, and initialization $x_0 = [-1.2, 1.0, -1.2, 1.0, ...]$. For Two-Sided L-BFGS, we use envelope hyperparameter values $\epsilon = 10^{-4}$ and $M = 10^4$. Both algorithms deploy an identical strong Wolfe line search with $c_1 = 10^{-4}$ and $c_2 = 0.9$. See Figure 1 for simulation results.

\begin{figure}[H]
    \centering
    \includegraphics[width=0.8\linewidth]{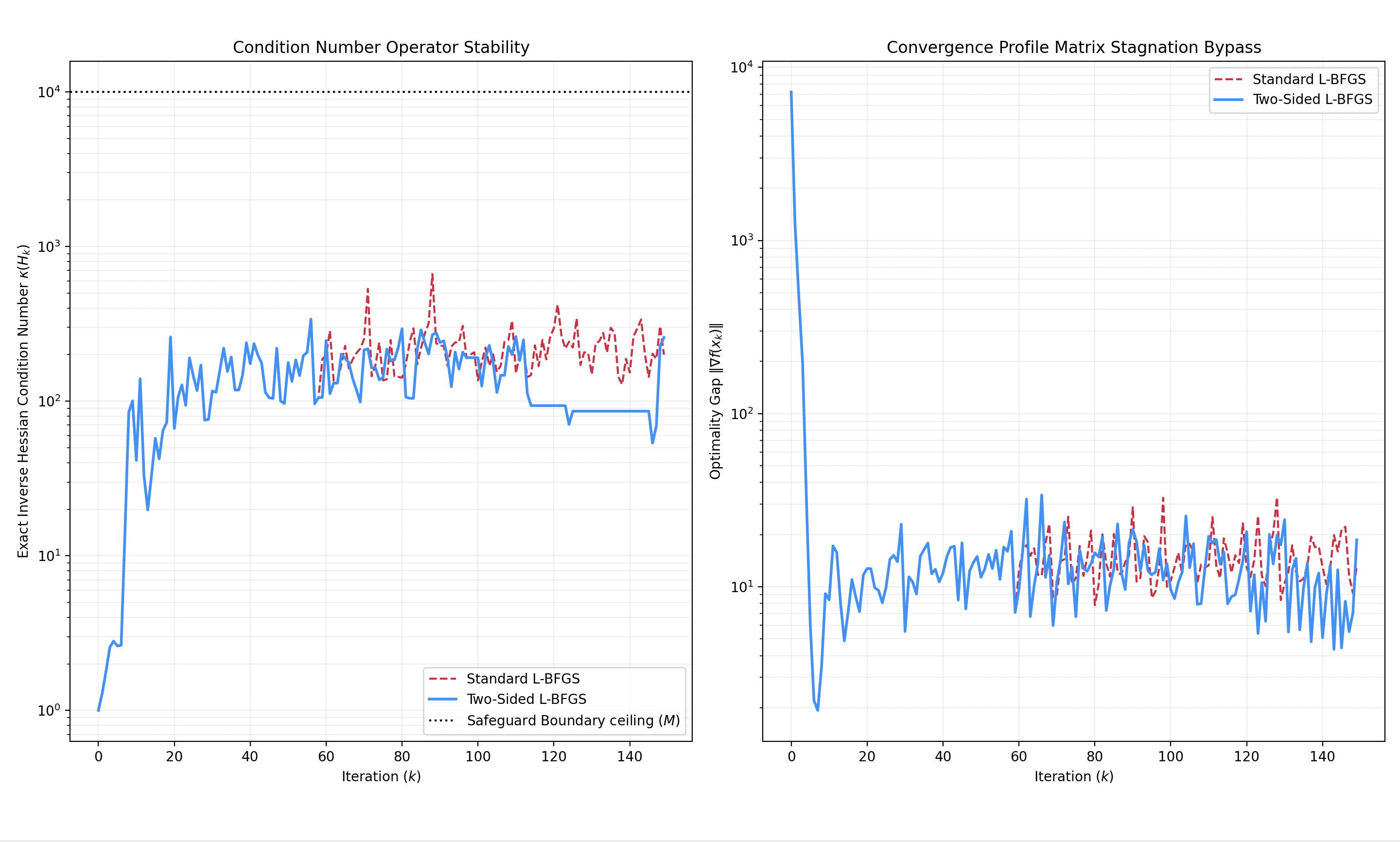}
    \caption{$\kappa(H_k)$ and $\| \nabla f(x_k)\|$ over $k$ in Two-Sided L-BFGS vs. Standard L-BFGS with OS Scaling}
\end{figure}

\subsection{Modified DIXMAAN Benchmark}

We utilize a truncated DIXMAAN benchmark function of dimension $n$ that takes the form

\begin{equation}
f(x) = 1 + \sum_{i=1}^n \alpha x_i^2 \left( \frac{i}{n} \right)^{k_1} + \sum_{i=1}^{n-1} \beta x_i^2 (x_{i+1} + x_{i+1}^2)^2 \left( \frac{i}{n} \right)^{k_2},
\end{equation}

\noindent
where we set $\alpha = 1.0, \beta = 1.0, k_1 = 2, k_2 = 2$, which are set to induce ill-conditioning. See Figure 2 for simulation results.

\begin{figure}[H]
    \centering
    \includegraphics[width=1\linewidth]{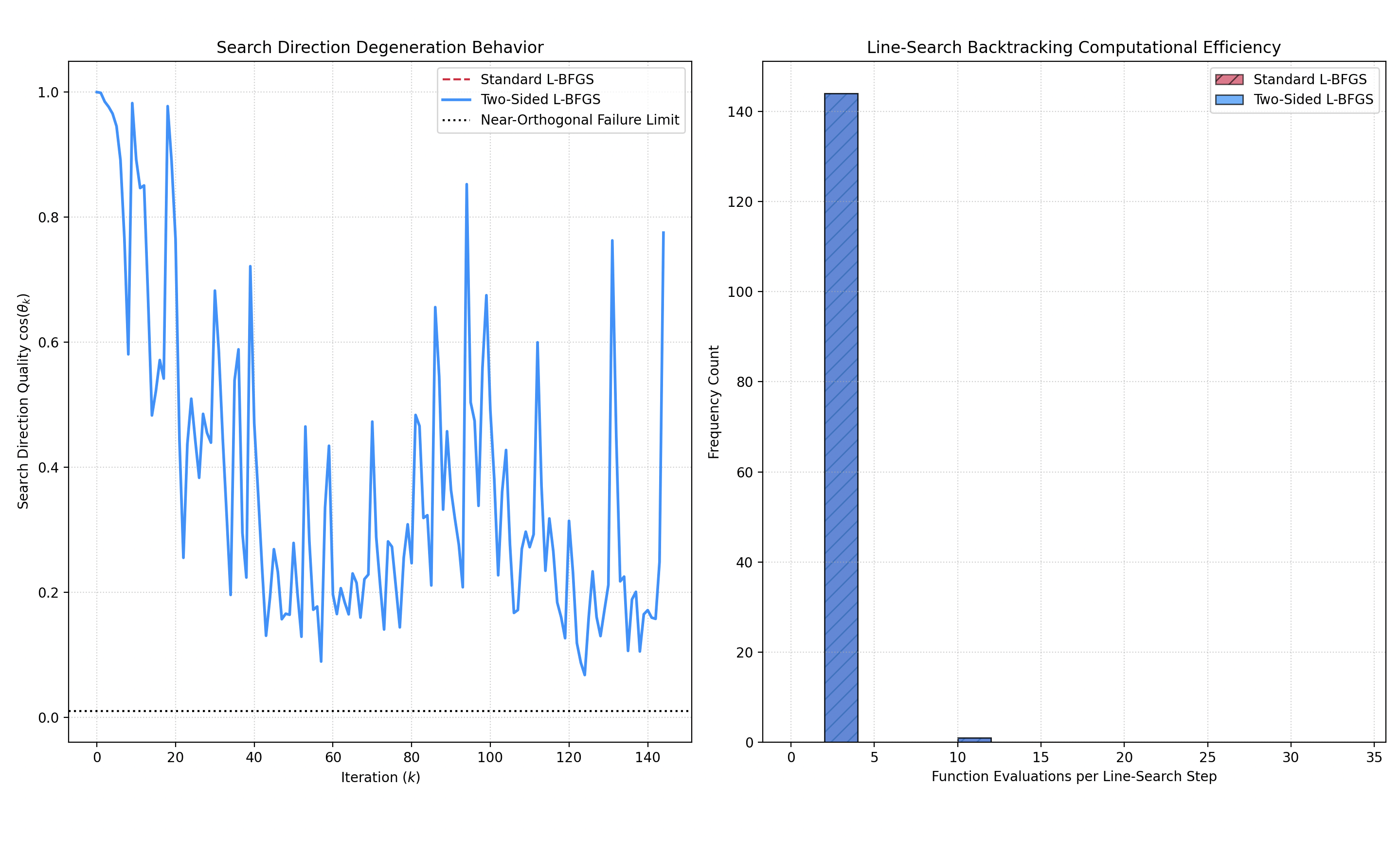}
    \caption{$\text{cos}\theta_k$ over $k$ and Number of Function Evaluations per Line-Search Step in Two-Sided L-BFGS vs. Standard L-BFGS with OS scaling}
\end{figure}

\subsection{Deep Autoencoder on MNIST}

For this experiment, we evaluate the proposed Two-Sided L-BFGS algorithm against standard L-BFGS on a deep autoencoder training task using the MNIST dataset. Deep autoencoders are well-known for creating highly non-convex, ill-conditioned optimization landscapes (cf. \cite{HS06}). Because the bottleneck layer forces a massive compression and subsequent expansion of data, the gradient backpropagation often yields directions with highly erratic scales. This acts as an excellent environment for testing any quasi-Newton method's true structural robustness, in this case, our proposed Two-Sided L-BFGS algorithm. One possible pushback against deploying the $[\epsilon, M]$ geometric envelope safeguard is that the memory overhead required to constrain admissible $(s_k, y_k)$ vector pairs destroys the algorithm's actual runtime efficiency. In our theoretical analysis, we showed that the computational complexity of checking the envelope condition $\frac{y_k^T s_k}{\|s_k\|^2} \geq \epsilon$ and $\frac{\|y_k\|^2}{y_k^T s_k} \leq M$ requires only a few vector dot products, which scales $O(n)$, so the $[\epsilon , M]$ geometric envelope should only introduce negligible wall-clock latency per iteration. We conjecture that Two-Sided L-BFGS will have improved wall-clock efficiency compared to standard L-BFGS. Our autoencoder consists of seven layers in a $784 \to 128 \to 64 \to 32 \to 64 \to 128 \to 784$ neuron by-layer arrangement for compression on the original $28 \times 28$ images in the MNIST dataset. See Figure 3 for the simulation results.

\begin{figure}[H]
    \centering
    \includegraphics[width=1\linewidth]{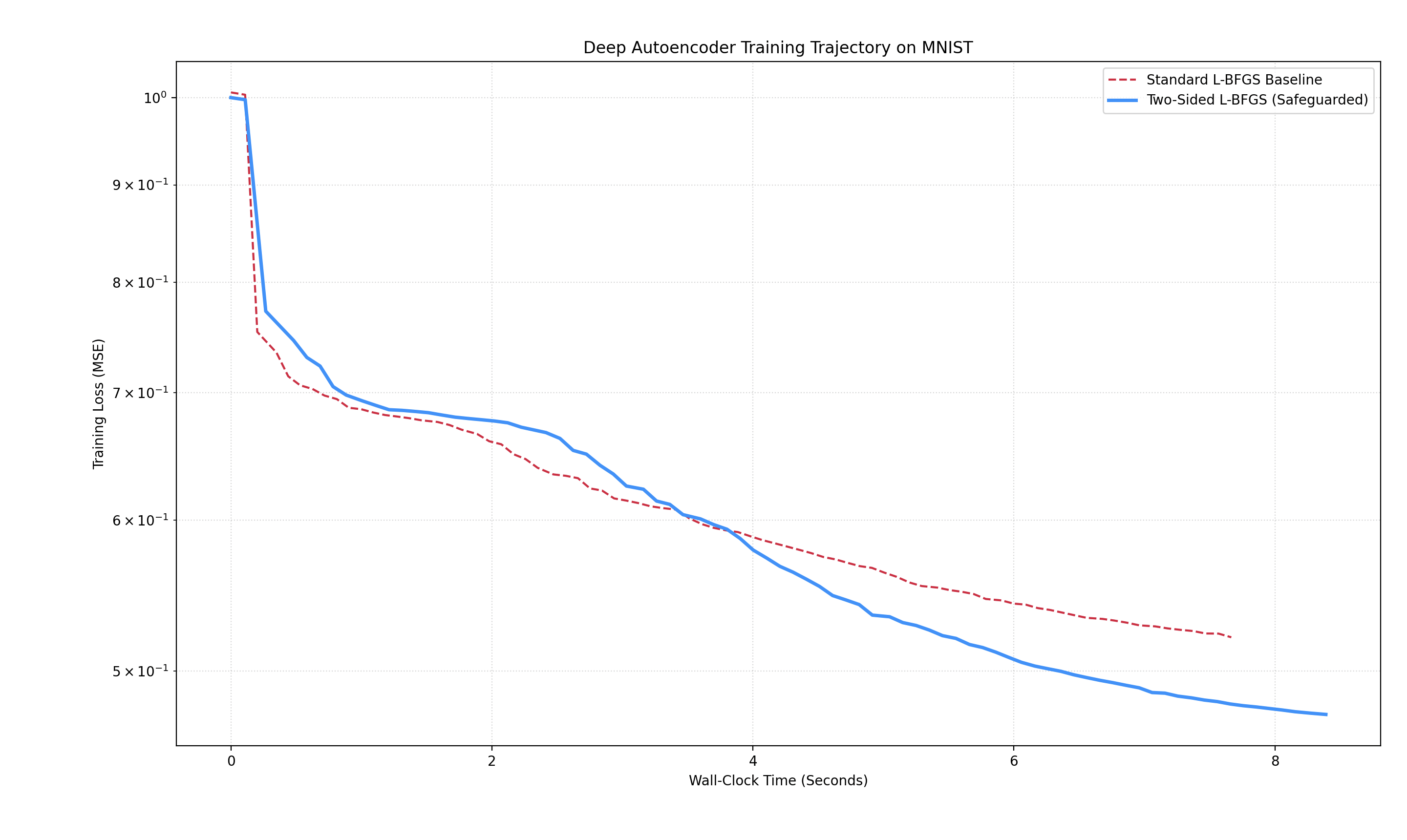}
    \caption{Wall-Clock Efficiency of Two-Sided L-BFGS vs. Standard L-BFGS in Deep Autoencoder Training on MNIST}
\end{figure}

\section{Discussion}

The theoretical guarantees and empirical validations presented in this work provide a unified justification for the proposed Two-Sided L-BFGS algorithm. We reflect on our results and consider some future research directions. \\
\\
A major pathology of standard L-BFGS in non-convex optimization is its vulnerability to severe ill-conditioning. Two-Sided L-BFGS directly addresses this with the $[\epsilon, M]$ geometric envelope by uniformly ensuring $\kappa(H_{k+1}) \not\to \infty$ (cf. Theorem 6), i.e., flattening $\kappa(H_{k+1})$ strictly beneath the deterministic upper ceiling dictated by the positive roots of our transcendental bounding equation. Rather than allowing the inverse Hessian to degrade and force numerical overflow (e.g., NaN errors), which caused the collapse of the standard baseline, as manifested in the failure of standard L-BFGS on the truncated DIXMAAN benchmark in experiment 2, Two-Sided L-BFGS preserves a structurally stable inverse Hessian operator throughout the entire optimization trajectory. Moreover, experiment 2 shows that the vast majority of function evaluations per line-search step in Two-Sided L-BFGS on the truncated DIXMAAN benchmark is tightly clustered around 2 to 4 function evaluations per line-search step. The spectral boundaries enforced by Two-Sided L-BFGS on $H_{k+1}$ yield a guaranteed lower cushion on search direction alignment, proving mathematically that $\cos^2\theta_k \geq 1/\kappa_{\max}^2 > 0$. Experiment 2 empirically validates this result, showing that the search direction maintains an effective alignment away from orthogonality. Consequently, the line search rapidly accepts large step lengths, frequently terminating within 2 to 4 evaluations. By bypassing line-search bottlenecks, Two-Sided L-BFGS avoids the characteristic flattening of the optimization curve and satisfies the Zoutendijk condition established in Theorem 7. \\
\\
A possible critique of algorithmic modifications that add safeguarding conditions is that the additional computational complexity degrades real-time efficiency. However, our deep autoencoder benchmarks on MNIST (Experiment 3) demonstrate that the safeguarding mechanism is quite lightweight. From a complexity perspective, evaluating the two-sided envelope condition requires only standard vector dot products ($\|s_k\|^2$, $\|y_k\|^2$, and $y_k^T s_k$), which scales strictly as $O(n)$. Because this operations cost is significantly smaller than the $O(mn)$ complexity required by the standard L-BFGS two-loop recursion, the per-iteration wall-clock cost remains virtually unchanged. The true value of this design choice is visible in the total wall-clock trajectory. While standard L-BFGS wastes computational time backtracking through deformed search directions, Two-Sided L-BFGS uses its well-conditioned operator to achieve smoother, more rapid descent. The slight cost of checking the envelope yields a substantial return in global optimization speed, allowing the model to reach target training losses in less total wall-clock time required by unconstrained baselines. \\
\\
Despite the promising theoretical and empirical results, the performance of Two-Sided L-BFGS depends on the proper selection of the envelope hyperparameters $\epsilon$ and $M$. If the lower bound $\epsilon$ is chosen too aggressively or the upper bound $M$ is made too restrictive, the algorithm becomes overly defensive. In this scenario, it may skip a high frequency of valid curvature updates, causing the inverse Hessian approximation to reduce back toward identity scaling and slowing convergence to that of standard gradient descent. A promising direction for future research lies in developing an adaptive or dynamical envelope. By automatically scaling $\epsilon_k$ and $M_k$ based on local gradient variance or tracking the recent frequency of skipped updates, the algorithm could dynamically tune its defensiveness to match the volatility of the local landscape. For example, a possible modification of Two-Sided L-BFGS is to make the envelope hyperparameters dynamical hyperparameters $\epsilon_k$, $M_k$ such that

\[
\epsilon_k = \epsilon_0 \cdot \text{min}(1, \|\nabla f(x_k)\|,
\]

\[
M_k = M_0 \cdot \text{max}(1, \|\nabla f(x_k)\|.
\]

\noindent
Additionally, here we assumed a deterministic, full-batch optimization setting. Extending the two-sided geometric envelope to stochastic regimes, such as mini-batch training for deep neural networks, presents a compelling open challenge. In stochastic environments, the noise inherent in mini-batch gradient estimations could easily cause false envelope violations, prematurely triggering update skips. Overcoming this will require integrating the two-sided envelope with stochastic variance-reduction techniques or running moving-average smoothing over the history pairs to separate structural curvature shifts from random mini-batch noise.
\bibliography{kappaL-BFGS}
\end{document}